\theoremstyle{plain}
\newtheorem{theorem}{Theorem}[section]
\newtheorem{proposition}[theorem]{Proposition}
\newtheorem{lemma}[theorem]{Lemma}
\theoremstyle{definition}
\theoremstyle{remark}
\renewcommand{\thefootnote}{\arabic{footnote}}
\def\R{\mathbb R}
\def\si{\sigma}
\def\vphi{\varphi}
\def\ep{\epsilon}
\def\na{\nabla}
\def\la{\langle} 
\def\ra{\rangle} 
\def\lt{\left}
\def\rt{\right}
\def\H{\mathbb H}
\def\leqs{<}
\def\geqs{>}
\numberwithin{equation}{section}
\title{The sharp higher order Lorentz--Poincar\'e and Lorentz--Sobolev inequalities in the hyperbolic spaces}
\author{Van Hoang Nguyen
}
\begin{document}
\maketitle


\renewcommand{\thefootnote}{}

\footnote{Email:  \href{mailto: Van Hoang Nguyen <vanhoang0610@yahoo.com>}{vanhoang0610@yahoo.com}.}

\footnote{2010 \emph{Mathematics Subject Classification\text}: 26D10, 46E35, 46E30, }

\footnote{\emph{Key words and phrases\text}: Poincar\'e inequality, Poincar\'e--Sobolev inequality, Lorentz--Sobolev space, hyperbolic spaces.}

\renewcommand{\thefootnote}{\arabic{footnote}}
\setcounter{footnote}{0}

\begin{abstract}
In this paper, we study the sharp Poincar\'e inequality and the Sobolev inequalities in the higher order Lorentz--Sobolev spaces in the hyperbolic spaces. These results generalize the ones obtained in \cite{Nguyen2020a} to the higher order derivatives and seem to be new in the context of the Lorentz--Sobolev spaces defined in the hyperbolic spaces.

\end{abstract}

\section{Introduction}
For $n\geq 2$, let us denote by $\H^n$ the hyperbolic space of dimension $n$, i.e., a complete, simply connected, $n-$dimensional Riemmanian manifold having constant sectional curvature $-1$. The aim in this paper is to generalize the main results obtained by the author in \cite{Nguyen2020a} to the higher order Lorentz--Sobolev spaces in $\H^n$. Before stating our results, let us fix some notation. Let $V_g, \na_g$ and $\Delta_g$ denote the volume element, the hyperbolic gradient and the Laplace--Beltrami operator in $\H^n$ with respect to the metric $g$ respectively. For higher order derivatives, we shall adopt the following convention
\[
\na_g^m \cdot = \begin{cases}
\Delta_g^{\frac m2} &\mbox{if $m$ is even,}\\
\na_g (\Delta_g^{\frac{m-1}2} \cdot) &\mbox{if $m$ is odd.}
\end{cases}
\]
Furthermore, for simplicity, we write $|\na^m_g \cdot|$ instead of $|\na_g^m \cdot|_g$ when $m$ is odd if no confusion occurs. For $1\leq p, q\leqs \infty$, we denote by $L^{p,q}(\H^n)$ the Lorentz space in $\H^n$ and by $\|\cdot\|_{p,q}$ the Lorentz quasi-norm in $L^{p,q}(\H^n)$. When $p=q$, $\|\cdot\|_{p,p}$ is replaced by $\|\cdot\|_p$ the Lebesgue $L_p-$norm in $\H^n$, i.e., $\|f\|_p = (\int_{\H^n} |f|^p dV_g)^{\frac1p}$ for a measurable function $f$ on $\H^n$. The Lorentz--Sobolev space $W^m L^{p,q}(\H^n)$ is defined as the completion of $C_0^\infty(\H^n)$ under the Lorentz quasi-norm $\|\na_g^m u\|_{p,q}:=\| |\na_g^m u| \|_{p,q}$. In \cite{Nguyen2020a}, the author proved the following Poincar\'e inequality in $W^1 L^{p,q}(\H^n)$
\begin{equation}\label{eq:Poincare1}
\|\na_g u\|_{p,q}^q \geq \lt(\frac{n-1}p\rt)^q \|u\|_{p,q}^q,\quad\forall\, u\in W^1 L^{p,q}(\H^n).
\end{equation}
provided $1\leqs q \leq p$. Furthermore, the constant $(\frac{n-1}p)^q$ in \eqref{eq:Poincare1} is the best possible and is never attained. The inequality \eqref{eq:Poincare1} generalizes the result in \cite{NgoNguyenAMV} to the setting of Lorentz--Sobolev space. The first main result in this paper extends the inequality \eqref{eq:Poincare1} to the higher order Sobolev space $W^{m} L^{p,q}(\H^n)$.

\begin{theorem}\label{MAINI}
Given $n \geq 2$, $m\geq 1$ and $1 \leqs p \leqs \infty$, let us denote the following constant
\begin{equation*}
C(n,m,p) = \begin{cases}
(\frac{(n-1)^2}{pp'})^{\frac m2} &\mbox{if $m$ is even,}\\
\frac {n-1}p (\frac{(n-1)^2}{pp'})^{\frac {m-1}2}&\mbox{if $m$ is odd,}
\end{cases}
\end{equation*}
where $p' = \frac p{p-1}$. Then the following Poincar\'e inequality holds in $W^{m} L^{p,q}(\H^n)$
\begin{equation}\label{eq:Poincarehigher}
\|\na_g^m u\|_{p,q}^q \geq C(n,m,p)^q \|u\|_{p,q}^q,\quad u\in W^m L^{p,q}(\H^n)
\end{equation}
for any $1\leqs p,q \leqs \infty$ if $m$ is even, or for any $1\leqs q \leq p\leqs \infty$ if $m$ is odd. Moreover, the constant $C(n,m,p)$ in \eqref{eq:Poincarehigher} is sharp and is never attained.
\end{theorem}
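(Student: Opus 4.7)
The plan is induction on $m$, pivoting on the identity
\[
\nabla_g^m u \;=\; \nabla_g^{m-2}(\Delta_g u),
\]
which holds in both parities by the paper's convention. This reduces the whole family to two ingredients: \eqref{eq:Poincare1} (the first-order Poincar\'e, already established in \cite{Nguyen2020a}) and a single new base case at $m=2$,
\[
\|\Delta_g u\|_{p,q}^q \;\ge\; \lt(\frac{(n-1)^2}{pp'}\rt)^q \|u\|_{p,q}^q, \qquad u\in W^2L^{p,q}(\H^n),\ 1<p,q<\infty,
\]
which I would isolate as a separate lemma. Granting this lemma and applying the inductive hypothesis to $\Delta_g u$ in place of $u$ gives
\[
\|\nabla_g^m u\|_{p,q} \;=\; \|\nabla_g^{m-2}(\Delta_g u)\|_{p,q} \;\ge\; C(n,m-2,p)\,\|\Delta_g u\|_{p,q} \;\ge\; C(n,m-2,p)\cdot\tfrac{(n-1)^2}{pp'}\cdot\|u\|_{p,q},
\]
and a routine calculation gives $C(n,m-2,p)\cdot(n-1)^2/(pp')=C(n,m,p)$ in both parities. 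The constraint $q\le p$ is inherited only when the chain bottoms out at the odd base case $m=1$, which is exactly when $m$ is odd, consistent with the statement.

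The main obstacle is the $m=2$ inequality. The $L^p$ case ($q=p$) is classical: testing $-\Delta_g u$ against $u|u|^{p-2}$ and integrating by parts turns the left-hand side into the Dirichlet form $\tfrac{4(p-1)}{p^2}\int_{\H^n}|\nabla_g(|u|^{p/2})|^2\,dV_g$, which by the sharp $L^2$-Poincar\'e applied to $|u|^{p/2}$ is at least $\tfrac{(n-1)^2}{pp'}\int_{\H^n}|u|^p\,dV_g$; H\"older on the left finishes it. For general $1<q<\infty$ I would work through the distribution function: write $\|f\|_{p,q}^q = q\int_0^\infty s^{q-1}\mu_f(s)^{q/p}\,ds$, apply the $L^p$ argument above to the truncations $T_s u = (|u|-s)_+\mathrm{sign}(u)$ in order to compare $\mu_u$ and $\mu_{\Delta_g u}$ pointwise at the correct scale, and then integrate in $s$. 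A parallel route is real interpolation: the sharp $L^p$-estimate holds throughout $p\in(1,\infty)$ with a constant that varies analytically in $p$, and $(L^{p_0},L^{p_1})_{\theta,q}=L^{p,q}$ upgrades this to the $L^{p,q}$ estimate, provided one verifies that the sharp constant $(n-1)^2/(pp')$ interpolates exactly at the $p$ corresponding to $\theta$.

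Sharpness and non-attainment follow by adapting the extremizing family used for \eqref{eq:Poincare1}: radial profiles $u_\vep(\rho)\sim\chi_\vep(\rho)\,e^{-(n-1)\rho/p}$ with a slowly varying cutoff $\chi_\vep$. Computing $\Delta_g u_\vep$ in geodesic polar coordinates shows that the leading exponential $e^{-(n-1)\rho/p}$ is reproduced under each application of $\Delta_g$ up to the multiplicative factor $(n-1)^2/(pp')$, since $(n-1)\coth\rho\to n-1$ at infinity. Hence $\|\nabla_g^m u_\vep\|_{p,q}/\|u_\vep\|_{p,q}\to C(n,m,p)$ as $\vep\to 0$, giving sharpness; the formal limit profile $e^{-(n-1)\rho/p}$ lies exactly on the boundary of $W^m L^{p,q}(\H^n)$, precluding attainment.
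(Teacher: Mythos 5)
Your global structure (reduce to the first--order result plus a new base case at $m=2$, then iterate) is exactly the paper's strategy, and your sharpness heuristic is also essentially the paper's: the profile $e^{-(n-1)\rho/p}$ corresponds, after the change of variables $t=V_g(B(0,\rho))\sim c\,e^{(n-1)\rho}$, to the profile $t^{-1/p}$ that the paper feeds into its construction, and the eigenvalue computation $\Delta_g e^{-\lambda\rho}\approx(\lambda^2-(n-1)\lambda)e^{-\lambda\rho}$ with $\lambda=(n-1)/p$ does reproduce $(n-1)^2/(pp')$. The problem is that the entire content of the theorem sits in the $m=2$ base case for $q\neq p$, and neither of your two proposed routes closes it. The truncation route fails because, unlike the first--order case where $|\na_g T_s u|=|\na_g u|\ind_{\{|u|>s\}}$, the Laplacian of a truncation is not dominated by $\Delta_g u$: $\Delta_g T_s u$ picks up a singular distributional contribution on the level set $\{|u|=s\}$, so there is no pointwise comparison of $\mu_{T_s u}$ with anything controlled by $\|\Delta_g u\|_{p,q}$. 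The interpolation route fails because real interpolation does not preserve sharp constants; the claim that $(n-1)^2/(pp')$ ``interpolates exactly'' is precisely the assertion to be proved, and the $K$-functional machinery only yields the endpoint constant up to a factor $\geq 1$. Note also that your inequality is claimed for \emph{all} $1<q<\infty$ when $m=2$ (no restriction $q\le p$), which rules out any argument relying on $\|\cdot\|_{p,q}$ being a norm.

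What the paper actually uses to bridge this gap is the pointwise rearrangement estimate of Ngo and the author (\eqref{eq:NgoNguyen}): with $f=-\Delta_g u$,
\[
u^*(t)\le v(t):=\int_t^\infty \frac{s\,f^{**}(s)}{\bigl(n\si_n\sinh^{n-1}(F(s))\bigr)^2}\,ds ,
\]
combined with the isoperimetric-type bound $n\si_n\sinh^{n-1}(F(t))>(n-1)t$ (Proposition \ref{ddtang}) and the Hardy inequality \eqref{eq:Hardy} for the maximal function $f^{**}$. Each application of the solution operator then costs exactly a factor $p/((n-1)^2)$ from the kernel bound and a factor $p'$ from \eqref{eq:Hardy} on the $(p,q)$-scale, while the one-dimensional Hardy inequality \eqref{eq:1DHardy} applied to $v$ recovers $\|u\|_{p,q}$; this is where $(n-1)^2/(pp')$ comes from for every $q$. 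Without this (or an equivalent symmetrization input for the second-order operator on the Lorentz scale), your base case is unproven, and the induction has nothing to stand on. The sharpness argument also needs more than the formal limit profile: the paper runs the test functions through the iterated integral operator $T$ and proves a decomposition $v_{R,k}=h_{R,k}+w_{R,k}$ with uniformly bounded error, which is the quantitative substitute for your ``slowly varying cutoff.''
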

Let us give some comments on Theorem \ref{MAINI}.  The Poincar\'e inequality in the hyperbolic space was proved by Tataru \cite{Tataru2001}
\begin{equation}\label{eq:Tataru}
\int_{\H^n} |\na_g^m u|^p dV_g \geq C \int_{\H^n} |u|^p dV_g,\quad u\in C_0^\infty(\H^n)
\end{equation}
for some constant $C \geqs 0$. The sharp value of constant $C$ in \eqref{eq:Tataru} is computed by Mancini and Sandeep \cite{ManciniSandeep2008} when $p=2$ and by Ngo and the author \cite{NgoNguyenAMV} for arbitrary $p$ (see \cite{BerchioNA} for another proof when $m=1$). Theorem \ref{MAINI} gives an extension of the Poincar\'e inequality \eqref{eq:Tataru} with the sharp constant to the higher order Sobolev spaces $W^m L^{p,q}(\H^n)$. Similar to the case $m=1$ established in \cite{Nguyen2020a}, we need an extra condition $q \leq p$ when $m$ is odd to apply the symmetrization argument. The proof of Theorem \ref{MAINI} follows the idea in the proof of Theorem $1.1$ in \cite{NgoNguyenAMV} by using the iterate argument. The main step in the proof is to establish the inequality when $m=2$. The case $m=1$ was already done in \cite{Nguyen2020a}.

There have been many improvements of \eqref{eq:Tataru} with the sharp constant in literature. For examples, the interesting readers may consult the papers \cite{ManciniSandeep2008,BenguriaFrankLoss,BerchioNA,BerchioJFA,NguyenPS2018} for the improvements of \eqref{eq:Tataru} for $m=1$ by adding the remainder terms concerning to Hardy weights or to the $L^q-$norms with $p \leqs q \leq \frac{np}{n-p}$. For the higher order Sobolev spaces, we refer the readers to the papers of Lu and Yang \cite{LuYang2019,LuYang2019a,Hong2019,Nguyenorder2}. Especially, in \cite[Theorem $1.1$]{Nguyenorder2} the author established the following improvement of \eqref{eq:Tataru} for $p=2$
\begin{equation}\label{eq:NguyenPSorder2}
\int_{\H^n} |\Delta_g u|^2 dV_g -\frac{(n-1)^4}{16} \int_{\H^n} |u|^2 dV_g \geq S_{n,2} \lt(\int_{\H^n} |u|^{\frac{2n}{n-4}} dV_g\rt)^{\frac{n-4}n}, \quad u\in C_0^\infty(\H^n)
\end{equation}
provided $n\geq 5$ where $S_{n,k}$ denotes the sharp constant in the Sobolev inequality in Euclidean space $\R^n$
\begin{equation}\label{eq:SSRn}
\int_{\R^n} |\na^k u|^2 dx \geq S_{n,k}\lt(\int_{\R^n} |u|^{\frac{2n}{n-2k}} dx\rt)^{\frac{n-2k}{n}},\quad u\in C_0^\infty(\R^n)
\end{equation}
when $n\geqs 2k$. The constant $S_{n,1}$ was found out independently by Talenti \cite{Talenti1976} and Aubin \cite{Aubin1976}. The sharp constant $S_{n,k}, k\geq 2$ was computed explicitly by Lieb \cite{Lieb} by proving the sharp Hardy--Littlewood--Sobolev inequality which is the dual version of \eqref{eq:SSRn}. 
In \cite{Nguyen2020a} the author proved the following inequality: given $n\geq 4$ and $\frac{2n}{n-1}\leq q \leq p \leqs n$, then for any $q\leq l \leq \frac{nq}{n-p}$ we have
\begin{equation}\label{eq:PSLorentz1}
\|\na_g u\|_{p,q}^q - \lt(\frac{n-1}p\rt)^q \|u\|_{p,q}^q \geq  S_{n,p,q,l}^q \|u\|_{p^*,l}^q,\quad\forall\, u\in W^1 L^{p,q}(\H^n),
\end{equation}
where $p^* = \frac{np}{n-p}$, and
\[
S_{n,p,q,l} =\begin{cases}
\lt[n^{1-\frac ql} \sigma_n^{\frac qn} \Big(\frac{(n-p)(l-q)}{qp}\Big)^{q+\frac ql -1} S\Big(\frac{lq}{l-p},q\Big)\rt]^{\frac 1q} &\mbox{if $q \leqs l \leq \frac{nq}{n-p}$,}\\
\frac{n-p}p \sigma_n^{\frac 1n}&\mbox{if $l =q$,}
\end{cases}
\]
where $\si_n$ denotes the volume of unit ball in $\R^n$ and $S\big(\frac{lq}{l-p},q\Big)$ is the sharp constant in the Sobolev inequality with fractional dimension (see \cite{NguyenPLMS}). It is interesting that the constant $S_{n,p,q,l}$ in \eqref{eq:PSLorentz1} is sharp and coincides with the sharp constant in the Lorentz--Sobolev type inequality in Euclidean space $\R^n$,
\[
\|\na u\|_{p,q}^q \geq S_{n,p,q,l}^q \|u\|_{p^*,l}^q.
\]
The previous inequality was proved by Alvino \cite{Alvino1977} for $l=q \leq p \leqs n$ and by Cassani, Ruf and Tarsi \cite{CassaniRufTarsi2018} for $l = q \geq p$. Our next aim is to improve the inequality \eqref{eq:Poincarehigher} in spirit of \eqref{eq:NguyenPSorder2} and \eqref{eq:PSLorentz1}.

\begin{theorem}\label{MAINII}
Let $n \geqs m \geq 1$ be integers, $1\leqs p\leqs \frac nm$ and $\frac{2n}{n-1}\leq q \leqs \infty$. Suppose, in addition, that $q \leq p$ if $m$ is odd. Then there holds
\begin{equation}\label{eq:LorentzPS}
\|\na_g^m u\|_{p,q}^q - C(n,m,p)^q \|u\|_{p,q}^q \geq S(n,m,p)^q \|u\|_{p_m^*,q}^q,\quad u\in W^m L^{p,q}(\H^n),
\end{equation}
where $p_i^* = \frac{np}{n-ip}$, $i =0,1,2,\ldots,$ and
\[
S(n,m,p) = \begin{cases}
\si_n^{\frac mn} \prod_{i=0}^{k-1} \frac{n (n-2p_{2i}^*)}{p_{2i}^* (p_{2i}^*)'}  &\mbox{if $m = 2k$, $k\geq 1$}\\
\si_n^{\frac mn}\frac{n-p}p  \prod_{i=1}^{k} \frac{n (n-2p_{2i-1}^*)}{p_{2i-1}^* (p_{2i-1}^*)'}   &\mbox{if $m= 2k+1$, $k\geq 1$.}
\end{cases}
\]
\end{theorem}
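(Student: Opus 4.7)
The plan is to prove Theorem~\ref{MAINII} by induction on $m$ with $m=1$ and $m=2$ as base cases; the inductive step combines the sharp higher order Poincar\'e inequality of Theorem~\ref{MAINI} with the inductive hypothesis of Theorem~\ref{MAINII} applied at a shifted Sobolev parameter. The case $m=1$ is the inequality \eqref{eq:PSLorentz1} specialized to $l=q$, whose constant $\sigma_n^{1/n}(n-p)/p$ matches the $k=0$ value of the odd formula defining $S(n,m,p)$. The case $m=2$ amounts to proving
\[
\|\Delta_g u\|_{p,q}^q - \Big(\tfrac{(n-1)^2}{pp'}\Big)^q \|u\|_{p,q}^q \;\geq\; \Big(\sigma_n^{2/n}\tfrac{n(n-2p)}{pp'}\Big)^q \|u\|_{p_2^*,q}^q
\]
and is the main new technical ingredient.

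For the inductive step at order $m = 2k$ with $k \geq 2$, I would apply the $m=2$ inequality to $v = \Delta_g^{k-1} u$ to obtain
\[
\|\Delta_g^k u\|_{p,q}^q \;\geq\; C(n,2,p)^q\,\|\Delta_g^{k-1} u\|_{p,q}^q + S(n,2,p)^q\,\|\Delta_g^{k-1} u\|_{p_2^*,q}^q.
\]
The first term is estimated by Theorem~\ref{MAINI} at order $2k-2$ and parameter $p$, giving $C(n,2,p)^q C(n,2k-2,p)^q \|u\|_{p,q}^q = C(n,2k,p)^q\|u\|_{p,q}^q$ since $C(n,m,p)$ is multiplicative in $m$. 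For the second term, the inductive hypothesis applied at order $2k-2$ and parameter $p_2^*$ gives, after dropping the non-negative Poincar\'e remainder,
\[
\|\Delta_g^{k-1} u\|_{p_2^*,q}^q \;\geq\; S(n,2k-2,p_2^*)^q\,\|u\|_{(p_2^*)_{2k-2}^*,q}^q.
\]
A direct computation shows $(p_2^*)_{2k-2}^* = p_{2k}^*$ and the telescoping of the product defining $S$ yields $S(n,2,p)\,S(n,2k-2,p_2^*) = S(n,2k,p)$, which closes the even case. The odd case $m=2k+1$ is analogous: factor $\na_g^m u = \na_g(\Delta_g^k u)$, apply \eqref{eq:PSLorentz1} with $l=q$ at parameter $p$, and invoke the even case at the shifted parameter $p^*$; the extra restriction $q \leq p$ is inherited from \eqref{eq:PSLorentz1}, while the bound $p < n/m$ keeps every intermediate exponent $p_j^*$ in the admissible range.

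The main obstacle is the $m=2$ base case, which has no direct Lorentz--Sobolev precedent. Following the strategy of \cite{Nguyen2020a}, I would first reduce to radial non-increasing profiles by a hyperbolic rearrangement argument; on such profiles $\Delta_g$ becomes the one-dimensional operator $u'' + (n-1)\coth(r)u'$ in the geodesic radius $r$. On radial profiles one aims to chain two applications of \eqref{eq:PSLorentz1}, the first at parameter $p$ and the second at the shifted parameter $p^*$, using $|\na_g u|$ as the intermediate quantity that transfers mass from $\|\Delta_g u\|_{p,q}$ down to $\|u\|_{p_2^*,q}$; the first application extracts the Poincar\'e mass $((n-1)^2/(pp'))^q\|u\|_{p,q}^q$ and leaves a surplus at exponent $p^*$, which the second application converts into $(\sigma_n^{2/n} n(n-2p)/(pp'))^q\|u\|_{p_2^*,q}^q$ via the Talenti--Aubin extremals. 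The delicate point is arranging the two applications so that the curvature contributions combine precisely to $(n-1)^2/(pp')$ without wasting mass, mirroring the $m=2$ step in the proof of Theorem~\ref{MAINI} while preserving the Lorentz--Sobolev refinement in the remainder.
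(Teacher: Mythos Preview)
Your inductive scheme is essentially the one the paper uses: peel off either $\Delta_g$ or $\na_g$, absorb the Poincar\'e mass via Theorem~\ref{MAINI}, and cascade a Lorentz--Sobolev estimate at the shifted exponent. The paper iterates the plain second--order bound \eqref{eq:LSorder2} rather than the full inductive hypothesis of Theorem~\ref{MAINII}, but since you discard the nonnegative Poincar\'e remainder at each step the two arrangements produce the same constant, and your telescoping check $S(n,2,p)\,S(n,2k-2,p_2^*)=S(n,2k,p)$ is correct.

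The genuine gap is the $m=2$ base case. A hyperbolic rearrangement of $u$ does not control $\|\Delta_g u\|_{p,q}$: there is no P\'olya--Szeg\H{o} principle for second--order operators, so ``reduce to radial non-increasing profiles'' is illegitimate here. Even on radial profiles the proposed chaining of two copies of \eqref{eq:PSLorentz1} via $w=|\na_g u|$ fails, because $|\na_g|\na_g u||=|u''|$ while $|\Delta_g u|=|u''+(n-1)\coth(r)\,u'|$; the curvature term does not arrange itself into the factor $(n-1)^2/(pp')$ by this route. The paper's substitute for symmetrization is the Talenti--type comparison \eqref{eq:NgoNguyen}: with $f=-\Delta_g u$ one has the pointwise majorant $u^*(t)\le v(t)=\int_t^\infty s f^{**}(s)\,(n\sigma_n\sinh^{n-1}F(s))^{-2}\,ds$, and the base case \eqref{eq:improvedLS2a} is obtained from weighted Hardy estimates on $v$ (Propositions~\ref{order2I}--\ref{order2II} and Theorem~\ref{keytheorem}), using $q\ge \tfrac{2n}{n-1}$ through the key inequality \eqref{eq:keyest}. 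Replacing your rearrangement step by this comparison closes the argument.
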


The rest of this paper is organized as follows. In Section \S2, we recall some facts on the hyperbolic spaces and the non-increasing spherically symmetric rearrangement in the hyperbolic spaces. We also prepare some auxiliary results which are important in the proof of our main results. Theorem \ref{MAINI} is proved in Section \S3 while the Section \S4 is devoted to prove Theorem \ref{MAINII}.

\section{Preliminaries}
We start this section by briefly recalling some basis facts on the hyperbolic spaces and the Lorentz--Sobolev space defined in the hyperbolic spaces. Let $n\geq 2$, a hyperbolic space of dimension $n$ (denoted by $\H^n$) is a complete , simply connected Riemannian manifold having constant sectional curvature $-1$. There are several models for the hyperbolic space $\H^n$ such as the half-space model, the hyperboloid (or Lorentz) model and the Poincar\'e ball model. Notice that all these models are Riemannian isometry. In this paper, we are interested in the Poincar\'e ball model of the hyperbolic space since this model is very useful for questions involving rotational symmetry. In the Poincar\'e ball model, the hyperbolic space $\H^n$ is the open unit ball $B_n\subset \R^n$ equipped with the Riemannian metric
\[
g(x) = \Big(\frac2{1- |x|^2}\Big)^2 dx \otimes dx.
\]
The volume element of $\H^n$ with respect to the metric $g$ is given by
\[
dV_g(x) = \Big(\frac 2{1 -|x|^2}\Big)^n dx,
\]
where $dx$ is the usual Lebesgue measure in $\R^n$. For $x \in B_n$, let $d(0,x)$ denote the geodesic distance between $x$ and the origin, then we have $d(0,x) = \ln (1+|x|)/(1 -|x|)$. For $\rho \geqs 0$, $B(0,\rho)$ denote the geodesic ball with center at origin and radius $\rho$. If we denote by $\na$ and $\Delta$ the Euclidean gradient and Euclidean Laplacian, respectively as well as $\la \cdot, \cdot\ra$ the standard scalar product in $\R^n$, then the hyperbolic gradient $\na_g$ and the Laplace--Beltrami operator $\Delta_g$ in $\H^n$ with respect to metric $g$ are given by
\[
\na_g = \Big(\frac{1 -|x|^2}2\Big)^2 \na,\quad \Delta_g = \Big(\frac{1 -|x|^2}2\Big)^2 \Delta + (n-2) \Big(\frac{1 -|x|^2}2\Big)\la x, \na \ra,
\]
respectively. For a function $u$, we shall denote $\sqrt{g(\na_g u, \na_g u)}$ by $|\na_g u|_g$ for simplifying the notation. Finally, for a radial function $u$ (i.e., the function depends only on $d(0,x)$) we have the following polar coordinate formula
\begin{equation*}
\int_{\H^n} u(x) dx = n \sigma_n \int_0^\infty u(\rho) \sinh^{n-1}(\rho)\,  d\rho.
\end{equation*}

It is now known that the symmetrization argument works well in the setting of the hyperbolic. It is the key tool in the proof of several important inequalities such as the Poincar\'e inequality, the Sobolev inequality, the Moser--Trudinger inequality in $\H^n$. We shall see that this argument is also the key tool to establish the main results in the present paper. Let us recall some facts about the rearrangement argument in the hyperbolic space $\H^n$. A measurable function $u:\H^n \to \R$ is called vanishing at the infinity if for any $t >0$ the set $\{|u| > t\}$ has finite $V_g-$measure, i.e.,
\[
V_g(\{|u|> t\}) = \int_{\{|u|> t\}} dV_g < \infty.
\]
For such a function $u$, its distribution function is defined by
\[
\mu_u(t) = V_g( \{|u|> t\}).
\]
Notice that $t \to \mu_u(t)$ is non-increasing and right-continuous. The non-increasing rearrangement function $u^*$ of $u$ is defined by
\[
u^*(t) = \sup\{s > 0\, :\, \mu_u(s) > t\}.
\] 
The non-increasing, spherical symmetry, rearrangement function $u^\sharp$ of $u$ is defined by
\[
u^\sharp(x) = u^*(V_g(B(0,d(0,x)))),\quad x \in \H^n.
\]
It is well-known that $u$ and $u^\sharp$ have the same non-increasing rearrangement function (which is $u^*$). Finally, the maximal function $u^{**}$ of $u^*$ is defined by
\[
u^{**}(t) = \frac1t \int_0^t u^*(s) ds.
\]
Evidently, $u^*(t) \leq u^{**}(t)$.

For $1\leq p, q < \infty$, the Lorentz space $L^{p,q}(\H^n)$ is defined as the set of all measurable function $u: \H^n \to \R$ satisfying
\[
\|u\|_{L^{p,q}(\H^n)}: = \lt(\int_0^\infty \lt(t^{\frac1p} u^*(t)\rt)^q \frac{dt}t\rt)^{\frac1q} < \infty.
\]
It is clear that $L^{p,p}(\H^n) = L^p(\H^n)$. Moreover, the Lorentz spaces are monotone with respect to second exponent, namely
\[
L^{p,q_1}(\H^n) \subsetneq L^{p,q_2}(\H^n),\quad 1\leq q_1 < q_2 < \infty.
\]
The functional $ u\to \|u\|_{L^{p,q}(\H^n)}$ is not a norm in $L^{p,q}(\H^n)$ except the case $q \leq p$ (see \cite[Chapter $4$, Theorem $4.3$]{Bennett}). In general, it is a quasi-norm which  turns out to be equivalent to the norm obtained replacing $u^*$ by its maximal function $u^{**}$ in the definition of $\|\cdot\|_{L^{p,q}(\H^n)}$. Moreover, as a consequence of Hardy inequality, we have
\begin{proposition}
Given $p\in (1,\infty)$ and $q \in [1,\infty)$. Then for any function $u \in L^{p,q}(\H^n)$ it holds 
\begin{equation}\label{eq:Hardy}
\lt(\int_0^\infty \lt(t^{\frac1p} u^{**}(t)\rt)^q \frac{dt}t\rt)^{\frac1q} \leq \frac p{p-1} \lt(\int_0^\infty \lt(t^{\frac1p} u^*(t)\rt)^q \frac{dt}t\rt)^{\frac1q} = \frac p{p-1} \|u\|_{L^{p,q}(\H^n)}.
\end{equation}
\end{proposition}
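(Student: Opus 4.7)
The inequality to establish is the classical weighted Hardy inequality applied to the non-increasing rearrangement, and the cleanest route is via Minkowski's integral inequality combined with a scaling change of variables.

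The plan is to start by rewriting $u^{**}(t) = \frac{1}{t}\int_0^t u^*(s)\,ds$ in a form that allows for a direct application of Minkowski. Performing the change of variable $s = t\tau$ in the defining integral gives
\[
u^{**}(t) = \int_0^1 u^*(t\tau)\,d\tau,
\]
so that the outer weighted $L^q$ norm of $u^{**}$ becomes the norm of an integral of a family of functions indexed by $\tau \in (0,1)$. Since $q \geq 1$, Minkowski's integral inequality for the measure $t^{q/p-1}\,dt$ on $(0,\infty)$ yields
\[
\left(\int_0^\infty \bigl(t^{1/p} u^{**}(t)\bigr)^q \frac{dt}{t}\right)^{1/q}
\leq \int_0^1 \left(\int_0^\infty t^{q/p-1} u^*(t\tau)^q \,dt\right)^{1/q} d\tau.
\]

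The next step is to reduce the inner integral on the right-hand side to the weighted $L^q$ norm of $u^*$ itself by the substitution $r = t\tau$. This gives $t^{q/p-1}\,dt = \tau^{-q/p}\,r^{q/p-1}\,dr$, so that
\[
\int_0^\infty t^{q/p-1} u^*(t\tau)^q \,dt = \tau^{-q/p}\int_0^\infty r^{q/p-1} u^*(r)^q \,dr,
\]
and extracting the $\tau$-dependent factor,
\[
\left(\int_0^\infty \bigl(t^{1/p} u^{**}(t)\bigr)^q \frac{dt}{t}\right)^{1/q}
\leq \left(\int_0^1 \tau^{-1/p}\,d\tau\right) \left(\int_0^\infty \bigl(r^{1/p} u^*(r)\bigr)^q \frac{dr}{r}\right)^{1/q}.
\]

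The final step is to compute the one-dimensional integral: since $p > 1$ we have $1/p < 1$, so $\int_0^1 \tau^{-1/p}\,d\tau = \frac{1}{1-1/p} = \frac{p}{p-1}$, which delivers the constant in \eqref{eq:Hardy}. There is no real obstacle here; the only points to flag are where the two hypotheses enter. The assumption $q \geq 1$ is used to apply Minkowski's integral inequality (which fails for $q < 1$), and the assumption $p > 1$ is exactly what makes the boundary integral $\int_0^1 \tau^{-1/p}\,d\tau$ finite, so the constant $p/(p-1) = p'$ appears naturally and is sharp by the standard Hardy scaling argument.
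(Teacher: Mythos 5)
Your proof is correct. The paper itself offers no argument for this proposition: it is quoted as a standard consequence of the classical power-weighted Hardy inequality (in the spirit of the treatment in Bennett--Sharpley), i.e.\ one applies the one-dimensional Hardy inequality with weight $t^{\frac qp-1}$ to the non-increasing function $u^*$ and reads off the constant $\bigl(\frac{q}{q-q/p}\bigr)^q=(p')^q$. What you do instead is give a short self-contained proof of that Hardy inequality: writing $u^{**}(t)=\int_0^1 u^*(t\tau)\,d\tau$, applying Minkowski's integral inequality (valid since $q\ge 1$) in the space $L^q\bigl((0,\infty),t^{\frac qp-1}dt\bigr)$, and using the homogeneity of the weight to pull out the factor $\tau^{-1/p}$, whose integral over $(0,1)$ is exactly $\frac p{p-1}$ because $p>1$. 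The two routes are essentially equivalent in content, but yours has the merit of being completely explicit, of isolating precisely where each hypothesis ($q\ge1$ for Minkowski, $p>1$ for convergence of $\int_0^1\tau^{-1/p}d\tau$) enters, and of not even using the monotonicity of $u^*$, so it proves the weighted Hardy inequality for arbitrary nonnegative integrands. The closing remark about sharpness is not needed for the statement, but it is harmless.
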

For $1\leq p, q \leqs \infty$ and an integer $m\geq 1$, we define the $m-$th order Lorentz--Sobolev space $W^mL^{p,q}(\H^n)$ by taking the completion of $C_0^\infty(\H^n)$ under the quasi-norm
\[
\|\na_g^m u\|_{p,q} := \| |\na_g^m u|\|_{p,q}.
\]
It is obvious that $W^mL^{p,p}(\H^n) = W^{m,p}(\H^n)$ the $m-$th order Sobolev space in $\H^n$. In \cite{Nguyen2020a}, the author established the following P\'olya--Szeg\"o principle in the first order Lorenz--Sobolev spaces $W^1L^{p,q}(\H^n)$ which generalizes the classical P\'olya--Szeg\"o principle in the hyperbolic space.
\begin{theorem}
Let $n\geq 2$, $1\leq q \leq p \leqs \infty$ and $u\in W^{1}L^{p,q}(\H^n)$. Then $u^\sharp \in W^{1}L^{p,q}(\H^n)$ and 
$$\|\na_g u^\sharp\|_{p,q} \leq \|\na_g u\|_{p,q}.$$
\end{theorem}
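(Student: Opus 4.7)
The plan is to adapt the classical rearrangement-plus-co-area strategy underlying the Pólya--Szegő inequality to the Lorentz scale in $\H^n$. By density of $C_0^\infty(\H^n)$ in $W^{1}L^{p,q}(\H^n)$ and a Sard-type perturbation of level values, I may assume that $u\in C_0^\infty(\H^n)$ is Morse, so that $\{|u|=t\}$ is a smooth hypersurface for a.e.\ $t > 0$ and $\mu_u$ is absolutely continuous on $\{\mu_u > 0\}$.

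The heart of the proof is a Talenti-type differential inequality. The co-area formula gives
\[
-\mu_u'(t) = \int_{\{|u|=t\}} \frac{1}{|\na_g u|_g}\, d\mathcal{H}^{n-1},
\]
while the hyperbolic isoperimetric inequality provides $P(\{|u|>t\}) \geq n\si_n \sinh^{n-1}(\rho(t))$, where $\rho(t)$ is the radius of the geodesic ball with $V_g$-volume $\mu_u(t)$. Combining these two via Hölder's inequality (applied to the splitting $1 = |\na_g u|_g^{(q-1)/q}\cdot |\na_g u|_g^{-(q-1)/q}$) yields
\[
\bigl(n\si_n \sinh^{n-1}\rho(t)\bigr)^{q} \leq (-\mu_u'(t))^{q-1} \int_{\{|u|=t\}} |\na_g u|_g^{q-1}\, d\mathcal{H}^{n-1}.
\]
Since on the symmetrized function one has the explicit radial identity
\[
|\na_g u^\sharp|_g(x) = n\si_n \sinh^{n-1}(d(0,x))\, \bigl(-(u^*)'(V_g(B(0,d(0,x))))\bigr),
\]
the change of variables $s = \mu_u(t)$ rewrites the above as a one-dimensional comparison on $(0,\infty)$ between $|\na_g u^\sharp|_g^*(s)$ and an average over the level sets of $u$.

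To promote this pointwise-in-$s$ bound to the Lorentz quasi-norm, I would use the distributional representation
\[
\|f\|_{p,q}^q = \int_0^\infty \bigl(s^{1/p} f^*(s)\bigr)^q\, \frac{ds}{s},
\]
combined with the Hardy inequality \eqref{eq:Hardy} and the Hardy--Littlewood rearrangement inequality applied to the level-set integrals of $|\na_g u|_g^{q-1}$. The hypothesis $q \leq p$ is indispensable here: it is precisely what makes $\|\cdot\|_{p,q}$ a genuine norm and allows the equivalence between the $u^*$- and $u^{**}$-formulations to be used without losing the sharp constant $1$ in front of $\|\na_g u\|_{p,q}$. This last passage---from averages on level sets of $|\na_g u|_g$ back to its decreasing rearrangement---is the main technical obstacle, and is exactly the point at which the restriction $q \leq p$ is unavoidable; once it is cleared, the Lorentz norm of $|\na_g u^\sharp|_g$ is bounded above by that of $|\na_g u|_g$ as required.
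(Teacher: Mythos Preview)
The paper does not prove this theorem at all: it is quoted in Section~2 as a result established by the author in the earlier preprint \cite{Nguyen2020a}, and no argument is given here. So there is no ``paper's proof'' to compare against.

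That said, your sketch has a genuine gap at exactly the step you flag as ``the main technical obstacle.'' The H\"older splitting you write down produces the level-set quantity $\int_{\{|u|=t\}} |\na_g u|_g^{\,q-1}\,d\mathcal{H}^{n-1}$, and via the co-area formula this integrates in $t$ to $\|\na_g u\|_q^q$, \emph{not} to $\|\na_g u\|_{p,q}^q$. Neither the Hardy inequality \eqref{eq:Hardy} nor the Hardy--Littlewood rearrangement inequality converts such level-set integrals of $|\na_g u|_g^{q-1}$ (taken along level sets of $u$) into the decreasing rearrangement $(|\na_g u|_g)^*$ that defines the Lorentz norm; these are rearrangements with respect to different functions. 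Your appeal to ``$q\le p$ makes $\|\cdot\|_{p,q}$ a norm'' is also not the operative mechanism: the reason $q\le p$ enters is that the weight $s^{q/p-1}$ in the Lorentz integral is then non-increasing, which is what allows a Hardy--Littlewood comparison between $|\na_g u^\sharp|_g$ (written as a function of the volume parameter, not yet rearranged) and the decreasing rearrangement of $|\na_g u|_g$. The argument that actually closes the gap (as carried out in \cite{Nguyen2020a}) does not pass through the exponent-$q$ H\"older step at all; it combines the isoperimetric inequality with the bound $\int_{\{|u|>t\}} |\na_g u|_g\,dV_g \le \int_0^{\mu_u(t)} (|\na_g u|_g)^*(s)\,ds$ and differentiates in the volume variable, then uses the monotonicity of $s^{q/p-1}$ to pass to Lorentz norms. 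As written, your proposal does not reach the conclusion.
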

For $r \geq 0$, define
\[
\Phi(r) = n \int_0^r \sinh^{n-1}(s) ds, \quad r\geq 0,
\]
and let $F$ be the function such that 
\[
r = n \si_n \int_0^{F(r)} \sinh^{n-1}(s) ds, \quad r\geq 0,
\]
i.e., $F(r) = \Phi^{-1}(r/\si_n)$. It was proved in \cite[Lemma $2.1$]{Nguyen2020a} that
\begin{equation}\label{eq:keyest}
\sinh^{q(n-1)}(F(t)) \geq \lt(\frac t{\si_n}\rt)^{q \frac{n-1}n} + \lt(\frac{n-1}n\rt)^q \lt(\frac t{\si_n}\rt)^q,\quad t \geq 0,
\end{equation}
provided $q \geq \frac{2n}{n-1}$. Moreover, we have the following result.
\begin{proposition}
Let $n \geq 2$. Then it holds
\begin{equation}\label{eq:keyyeu}
\sinh^{n}(F(t)) \geqs \frac t{\si_n},\quad t\geqs 0.
\end{equation}
\end{proposition}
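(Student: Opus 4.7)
The plan is to change variables via $\rho = F(t)$ and reduce the claim to an elementary one-variable calculus fact. By the definition of $F$, we have $t/\sigma_n = n\int_0^{\rho} \sinh^{n-1}(s)\,ds$, so after substitution the inequality \eqref{eq:keyyeu} becomes
\[
\sinh^n(\rho) > n \int_0^{\rho} \sinh^{n-1}(s)\, ds, \qquad \rho > 0.
\]
Since $F(0)=0$ and $F$ is a strictly increasing bijection of $[0,\infty)$ onto itself, this reformulation is equivalent to the original statement.

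To prove the reformulated inequality, I would set $h(\rho) = \sinh^n(\rho) - n\int_0^{\rho} \sinh^{n-1}(s)\, ds$ and differentiate: by the fundamental theorem of calculus and the chain rule,
\[
h'(\rho) = n \sinh^{n-1}(\rho)\cosh(\rho) - n\sinh^{n-1}(\rho) = n\sinh^{n-1}(\rho)\bigl(\cosh(\rho)-1\bigr).
\]
For $\rho>0$ both $\sinh^{n-1}(\rho)$ and $\cosh(\rho)-1$ are strictly positive, so $h'(\rho)>0$ on $(0,\infty)$. Combined with $h(0)=0$, this gives $h(\rho)>0$ for all $\rho>0$, which is exactly what is needed.

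I do not anticipate any serious obstacle: the only point worth being careful about is that the strict inequality (not merely $\geq$) comes from the fact that $\cosh(\rho)-1>0$ strictly for $\rho>0$, which corresponds to the assumption $t>0$ in the statement. Note that this proof is essentially the sharp version of a cruder bound $\sinh^n(\rho)\geq \sinh^{n-1}(\rho)\cdot \rho$ one might try first; the trick is to integrate by parts (or equivalently differentiate $\sinh^n$) rather than bound the integrand pointwise, since the latter would only yield an inequality of the form $n\int_0^\rho \sinh^{n-1}\leq \sinh^{n-1}(\rho)\cdot n\rho$, which is too weak.
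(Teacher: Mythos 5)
Your proof is correct and is essentially the paper's argument in differentiated form: the paper integrates the pointwise inequality $\sinh^{n-1}(s) < \sinh^{n-1}(s)\cosh(s)$ and recognizes $n\int_0^\rho \sinh^{n-1}(s)\cosh(s)\,ds = \sinh^n(\rho)$, which is exactly your computation of $h'(\rho) = n\sinh^{n-1}(\rho)(\cosh(\rho)-1)>0$ together with $h(0)=0$. No gap; the two write-ups are interchangeable.
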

\begin{proof}
Indeed, for $\rho \geqs 0$,we have
\[
n\int_0^\rho \sinh^{n-1}(s) ds \leqs n \int_0^\rho \sinh^{n-1}(s) \cosh(s) ds =  \sinh^{n}(\rho).
\]
Taking $\rho = F(t), t \geqs 0$ we obtain \eqref{eq:keyyeu}.
\end{proof}

\begin{proposition}\label{ddtang}
Let $n\geq 2$, then the function
\[
\vphi(t) =\frac{t}{\sinh^{n-1}(F(t))}
\]
is strictly increasing on $(0,\infty)$, and 
\[
\lim_{t\to \infty} \varphi(t) = \frac{n \si_n}{n-1}.
\]
\end{proposition}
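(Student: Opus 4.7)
The plan is to reduce the statement to a clean property of a function of $\rho$ alone via the substitution $\rho = F(t)$, and then prove monotonicity by differentiation.

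First I would note that from $t = n\sigma_n \int_0^{F(t)} \sinh^{n-1}(s)\,ds$ one gets by the inverse function theorem $F'(t) = 1/(n\sigma_n \sinh^{n-1}(F(t)))$, and that $F$ is strictly increasing with $F(t) \to \infty$ as $t \to \infty$. Setting $\rho = F(t)$, I would rewrite
\[
\varphi(t) = \frac{n\sigma_n \int_0^{\rho} \sinh^{n-1}(s)\,ds}{\sinh^{n-1}(\rho)} = n\sigma_n\, \psi(\rho), \qquad \psi(\rho) := \frac{\int_0^{\rho} \sinh^{n-1}(s)\,ds}{\sinh^{n-1}(\rho)}.
\]
Since $F$ is strictly increasing, strict monotonicity of $\varphi$ on $(0,\infty)$ is equivalent to strict monotonicity of $\psi$ on $(0,\infty)$.

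A direct computation gives
\[
\psi'(\rho) = \frac{\sinh^{n}(\rho) - (n-1)\cosh(\rho)\int_0^{\rho}\sinh^{n-1}(s)\,ds}{\sinh^{n}(\rho)},
\]
so it suffices to show $h(\rho) := \sinh^n(\rho) - (n-1)\cosh(\rho)\int_0^\rho \sinh^{n-1}(s)\,ds > 0$ for $\rho > 0$. Clearly $h(0)=0$, and differentiating gives $h'(\rho) = \sinh(\rho)\bigl[\sinh^{n-2}(\rho)\cosh(\rho) - (n-1)\int_0^\rho \sinh^{n-1}(s)\,ds\bigr]$. For $n\ge 3$, integration by parts with $u = \sinh^{n-2}(s)$ and $dv = \sinh(s)\,ds$ yields the reduction formula
\[
(n-1)\int_0^\rho \sinh^{n-1}(s)\,ds = \sinh^{n-2}(\rho)\cosh(\rho) - (n-2)\int_0^\rho \sinh^{n-3}(s)\,ds,
\]
so the bracket equals $(n-2)\int_0^\rho \sinh^{n-3}(s)\,ds \ge 0$, strictly positive for $\rho>0$, whence $h'(\rho)>0$. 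The case $n=2$ is handled by an elementary direct computation: $h(\rho) = \sinh^2\rho - \cosh\rho(\cosh\rho-1) = \cosh\rho - 1 > 0$ for $\rho>0$. This proves strict monotonicity.

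For the limit, I would apply L'Hospital's rule to $\psi(\rho)$ as $\rho \to \infty$:
\[
\lim_{\rho\to\infty} \psi(\rho) = \lim_{\rho\to\infty} \frac{\sinh^{n-1}(\rho)}{(n-1)\sinh^{n-2}(\rho)\cosh(\rho)} = \lim_{\rho\to\infty} \frac{\tanh(\rho)}{n-1} = \frac{1}{n-1},
\]
and combined with $F(t)\to\infty$ this yields $\lim_{t\to\infty}\varphi(t) = n\sigma_n/(n-1)$. The only mildly delicate point is the reduction formula failing at $n=2$ (where the boundary term at $s=0$ does not vanish), which is why $n=2$ is separated out; otherwise the argument is routine calculus.
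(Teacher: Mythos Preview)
Your proof is correct and follows essentially the same route as the paper: reduce via $\rho=F(t)$ to the monotonicity of $\psi(\rho)=\int_0^\rho\sinh^{n-1}(s)\,ds\big/\sinh^{n-1}(\rho)$, show the numerator $h$ of $\psi'$ is positive by checking $h(0)=0$ and $h'>0$, and obtain the limit by L'H\^opital. The only cosmetic difference is in the positivity of $h'$: the paper uses the one-line bound $(n-1)\int_0^\rho\sinh^{n-1}(s)\,ds<(n-1)\int_0^\rho\sinh^{n-2}(s)\cosh(s)\,ds=\sinh^{n-1}(\rho)$, which handles all $n\ge2$ uniformly, whereas you invoke the reduction formula and split off $n=2$.
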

\begin{proof}
Since $ t\mapsto F(t)$ is strictly increasing function, then it is enough to prove that the function
\[
\eta(\rho) = \frac{\int_0^\rho \sinh^{n-1}(s) ds}{\sinh^{n-1}(\rho)}
\]
is strictly increasing on $(0,\infty)$. Indeed, we have
\begin{align*}
\eta'(\rho) &= 1 -(n-1) \cosh(\rho) \frac{\int_0^\rho \sinh^{n-1}(s) ds}{\sinh^{n}(\rho)}\\
&=\frac1{\sinh^n (\rho)} \lt(\sinh^n(\rho) - (n-1)  \cosh(\rho) \int_0^\rho \sinh^{n-1}(s) ds\rt)\\
&=: \frac{\xi(\rho)}{\sinh^n (\rho)},
\end{align*}
and
\[
\xi'(\rho) =\cosh(\rho) \sinh^{n-1}(\rho) - (n-1) \sinh(\rho) \int_0^\rho \sinh^{n-1}(s) ds.
\]
For $\rho \geqs 0$, it holds
\[
(n-1)\int_0^\rho \sinh^{n-1}(s) ds \leqs (n-1) \int_0^\rho \sinh^{n-2}(s) \cosh(s) ds = \sinh^{n-1}(\rho),
\]
here we use $\cosh(s) \geqs \sinh(s)$ for $s \geqs 0$. Therefore, we get
\[
\xi'(\rho) \geqs \sinh^{n-1}(\rho) (\cosh(\rho) -\sinh(\rho)) \geqs 0,
\]
for $\rho \geqs 0$. Consequently, we have $\xi(\rho) \geqs \xi(0) =0$ for $\rho \geqs 0$. Hence, $\eta'(\rho) \geqs 0$ for $\rho \geqs 0$ which implies  that $\eta$ is strictly increasing function on $(0,\infty)$. By L'Hospital rule, we have 
\[
\lim_{\rho \to \infty} \eta(\rho) = \lim_{\rho\to \infty} \frac{\sinh^{n-1}(\rho)}{(n-1)\sinh^{n-2}(\rho) \cosh(\rho)} = \frac1{n-1}
\]
which yields the desired limit in this proposition.
\end{proof}

In the rest of this section, we shall frequently using the following one-dimensional Hardy inequality
\begin{lemma}
Let $1\leqs q \leqs p$. Then for any absolutely continuous function $u$ in $(0,\infty)$ such that $\lim_{t\to \infty} |u(t)| t^{\frac{p-q}q} =0$, it holds
\begin{equation}\label{eq:1DHardy}
\int_0^\infty |u'(t)|^q t^{p-1} dt \geq \lt(\frac{p-q}q\rt)^q \int_0^\infty |u(t)|^q t^{p-q-1} dt.
\end{equation}
\end{lemma}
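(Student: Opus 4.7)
The plan is to establish \eqref{eq:1DHardy} by the classical integration-by-parts plus H\"older argument for one-dimensional Hardy inequalities. By replacing $u$ with $|u|$ (which remains absolutely continuous, has $||u|'|\leq|u'|$ almost everywhere, and still satisfies the boundary decay hypothesis), I will assume $u\geq 0$. I will also assume that the right-hand side of \eqref{eq:1DHardy} is finite, since otherwise there is nothing to prove. A standard truncation argument, applied at the end, will justify reducing first to functions compactly supported in $(0,\infty)$, for which all integrals appearing in the calculation below are automatically finite.

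The first step is to write $t^{p-q-1}=\tfrac{1}{p-q}\tfrac{d}{dt}(t^{p-q})$ and integrate by parts over a finite interval $[\epsilon,R]$, using that $u^q$ is absolutely continuous (as $u\geq 0$ is absolutely continuous and $q>1$) with derivative $qu^{q-1}u'$. Since $p>q$, the boundary term $-u(\epsilon)^q\epsilon^{p-q}/(p-q)$ produced at $\epsilon$ is non-positive and may be discarded, while the boundary term at $R$ vanishes in the limit $R\to\infty$ by the assumption $\lim_{t\to\infty}u(t)t^{(p-q)/q}=0$. Bounding $-u^{q-1}u'\leq u^{q-1}|u'|$ and passing to the limit $\epsilon\to 0^+$, $R\to\infty$, this step gives
\begin{equation*}
\int_0^\infty u(t)^q\,t^{p-q-1}\,dt\leq\frac{q}{p-q}\int_0^\infty u(t)^{q-1}|u'(t)|\,t^{p-q}\,dt.
\end{equation*}

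The second step is to split the weight as $t^{p-q}=t^{(p-q-1)(q-1)/q}\cdot t^{(p-1)/q}$ and apply H\"older's inequality with conjugate exponents $q/(q-1)$ and $q$ to the integral on the right. This produces precisely the two integrals appearing in \eqref{eq:1DHardy}:
\begin{equation*}
\int_0^\infty u^{q-1}|u'|\,t^{p-q}\,dt\leq\left(\int_0^\infty u^q\,t^{p-q-1}\,dt\right)^{(q-1)/q}\left(\int_0^\infty |u'|^q\,t^{p-1}\,dt\right)^{1/q}.
\end{equation*}
Combining the two displays, dividing by the common factor $\left(\int_0^\infty u^q t^{p-q-1}\,dt\right)^{(q-1)/q}$, and raising to the $q$-th power yields \eqref{eq:1DHardy}.

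The main technical subtlety is justifying the division in the last step, since a priori we do not know that $\int_0^\infty u^q t^{p-q-1}\,dt$ is finite for an arbitrary admissible $u$. The cleanest way to handle this is to prove the inequality first for $u\in C_c^\infty((0,\infty))$, for which finiteness is automatic, and then approximate a general $u$ satisfying the hypotheses by a sequence of such functions obtained by truncation and mollification, passing to the limit via monotone convergence on the left-hand side while using the assumed finiteness of the right-hand side together with the decay hypothesis to control the tails.
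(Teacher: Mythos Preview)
The paper does not supply a proof of this lemma; it is stated as a classical one-dimensional Hardy inequality and used as a black box throughout Section~2. So there is no ``paper's proof'' to compare against, and your task was effectively to furnish the missing argument.

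Your argument is the standard and correct one: integrate $t^{p-q-1}=\frac{1}{p-q}(t^{p-q})'$ by parts, use the decay hypothesis at infinity and the sign of the boundary term at $\epsilon$, then close with H\"older's inequality with exponents $q$ and $q/(q-1)$. The weight bookkeeping and the treatment of the boundary terms are right, and the reduction to $u\geq 0$ via $||u|'|\leq |u'|$ is legitimate.

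One small expository slip: you write ``assume that the right-hand side of \eqref{eq:1DHardy} is finite, since otherwise there is nothing to prove.'' This is backwards. If the right-hand side $\int_0^\infty |u|^q t^{p-q-1}\,dt$ is infinite, there \emph{is} something to prove, namely that the left-hand side is infinite as well. What you presumably meant is to assume the \emph{left-hand side} $\int_0^\infty |u'|^q t^{p-1}\,dt$ is finite, since otherwise the inequality is trivial. You effectively repair this later by first proving the inequality for $u\in C_c^\infty((0,\infty))$ and then passing to the limit, which is the clean way to handle the possible infiniteness of $\int_0^\infty u^q t^{p-q-1}\,dt$ in the division step. With that correction the proof is complete.
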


Let $u \in C_0^\infty(\H^n)$ and $f = -\Delta_g u$. It was proved by Ngo and the author (see \cite[Proposition $2.2$]{NgoNguyenAMV}) that
\begin{equation}\label{eq:NgoNguyen}
u^*(t) \leq v(t):= \int_t^\infty \frac{s f^{**}(s)}{(n \si_n \sinh^{n-1}(F(s)))^2} ds,\quad t\geqs 0.
\end{equation}
The following results are important in the proof of Theorem \ref{MAINI} and Theorem \ref{MAINII}.
\begin{proposition}\label{order2I}
Let $n \geq 2$, $p\in (1,n)$ and $q \in (1,\infty)$. Then it holds
\begin{equation}\label{eq:order2I1}
\|\Delta_g u\|_{p,q}^q  \geq \lt(\frac{p-1}p n \si_n^{\frac1n}\rt)^q\int_0^\infty |v'(t)|^q (n\si_n \sinh^{n-1}(F(t)))^qt^{q(\frac1p -\frac1n) -1} dt.
\end{equation}
Furthermore, if $q \geq \frac{2n}{n-1}$ then we have
\begin{align}\label{eq:order2I2}
\|\Delta_g u\|_{p,q}^q  - &\lt(\frac{(n-1)(p-1)}p\rt)^q\int_0^\infty |v'(t)|^q (n\si_n \sinh^{n-1}(F(t)))^q t^{\frac qp -1} dt \notag\\
&\geq \lt(\frac{p-1}p n \si_n^{\frac1n}\rt)^q\int_0^\infty |v'(t)|^q (n\si_n \sinh^{n-1}(F(t)))^qt^{q(\frac1p -\frac1n) -1} dt.
\end{align}
\end{proposition}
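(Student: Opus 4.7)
My plan is to convert both inequalities into weighted integral inequalities for $f^{**}$, where $f=-\Delta_g u$, and then dispose of them using the Hardy inequality \eqref{eq:Hardy} together with the two pointwise lower bounds \eqref{eq:keyyeu} and \eqref{eq:keyest} for $\sinh^{n-1}(F(t))$.

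First I differentiate the defining formula of $v$ (coming from \eqref{eq:NgoNguyen}) to obtain the identity
\[
|v'(t)|\,(n\sigma_n\sinh^{n-1}(F(t)))^2 = t\,f^{**}(t),
\]
which, after raising to the $q$-th power, rewrites the right hand side of \eqref{eq:order2I1} as
\[
\Bigl(\frac{p-1}{p}\Bigr)^q \sigma_n^{-q(n-1)/n}\int_0^\infty \frac{t^{q(n-1)/n + q/p -1}}{\sinh^{q(n-1)}(F(t))}\,f^{**}(t)^q\,dt.
\]
On the other hand, the Hardy inequality \eqref{eq:Hardy} applied to $f$ yields
\[
\|\Delta_g u\|_{p,q}^q = \|f\|_{p,q}^q \ge \Bigl(\frac{p-1}{p}\Bigr)^q \int_0^\infty t^{q/p -1}\,f^{**}(t)^q\,dt.
\]
Thus \eqref{eq:order2I1} reduces to the pointwise bound $\sinh^{q(n-1)}(F(t)) \ge (t/\sigma_n)^{q(n-1)/n}$ for $t>0$, which follows directly from \eqref{eq:keyyeu} by raising to the $((n-1)q/n)$-th power.

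For the stronger inequality \eqref{eq:order2I2}, I perform the analogous substitution in the subtracted term on the left, which collapses to
\[
\Bigl(\frac{p-1}{p}\Bigr)^q \Bigl(\frac{n-1}{n}\Bigr)^q \int_0^\infty \frac{(t/\sigma_n)^q\, t^{q/p -1}}{\sinh^{q(n-1)}(F(t))}\,f^{**}(t)^q\,dt.
\]
Combining this with the already-derived form of the right hand side of \eqref{eq:order2I1}, the inequality \eqref{eq:order2I2} is equivalent to
\[
\|f\|_{p,q}^q \ge \Bigl(\frac{p-1}{p}\Bigr)^q \int_0^\infty \frac{t^{q/p -1}\,f^{**}(t)^q}{\sinh^{q(n-1)}(F(t))}\Bigl[(t/\sigma_n)^{q(n-1)/n} + \Bigl(\frac{n-1}{n}\Bigr)^q (t/\sigma_n)^q\Bigr]\,dt.
\]
The refined estimate \eqref{eq:keyest}, whose validity requires exactly the hypothesis $q\ge 2n/(n-1)$, states that the bracket is at most $\sinh^{q(n-1)}(F(t))$. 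The integrand is therefore at most $(\frac{p-1}{p})^q t^{q/p-1} f^{**}(t)^q$, and the displayed inequality follows from one further application of \eqref{eq:Hardy}.

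The only obstacle is bookkeeping: carefully tracking the powers of $t$, $\sigma_n$, and $n\sigma_n$ after substituting the formula for $|v'(t)|$, and recognizing that the resulting weight is precisely the combination whose size is controlled by \eqref{eq:keyest}. No new analytic ingredient is needed beyond \eqref{eq:Hardy}, \eqref{eq:keyyeu}, \eqref{eq:keyest}, and the pointwise estimate \eqref{eq:NgoNguyen} that underlies the definition of $v$.
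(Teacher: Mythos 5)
Your proposal is correct and follows essentially the same route as the paper: substitute the explicit formula $|v'(t)|=t f^{**}(t)/(n\sigma_n\sinh^{n-1}(F(t)))^2$, reduce \eqref{eq:order2I1} to the pointwise bound \eqref{eq:keyyeu} plus the Hardy inequality \eqref{eq:Hardy}, and reduce \eqref{eq:order2I2} to the refined estimate \eqref{eq:keyest} plus \eqref{eq:Hardy}. The bookkeeping of the powers of $t$ and $\sigma_n$ in your displayed expressions checks out.
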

\begin{proof}
We have 
\[
v'(t) =-\frac{t f^{**}(t)}{(n \si_n \sinh^{n-1}(F(t)))^2}, 
\]
and hence
\begin{equation}\label{eq:dthuc}
\int_0^\infty |v'(t)|^q (n\si_n \sinh^{n-1}(F(t)))^qt^{q(\frac1p -\frac1n) -1} dt =\int_0^{\infty}  \frac{(f^{**}(t))^q t^{q\frac{n-1}n}}{(n\si_n \sinh^{n-1}(F(t)))^q} t^{\frac{q}p -1} dt.
\end{equation}
Using \eqref{eq:keyyeu} and \eqref{eq:Hardy} we obtain
\begin{align*}
\int_0^\infty |v'(t)|^q (n\si_n \sinh^{n-1}(F(t)))^qt^{q(\frac1p -\frac1n) -1} dt &\leq \frac1{n^q \si_n^{\frac qn}} \int_0^\infty (f^{**}(t))^q  t^{\frac{q}p -1} dt\\
&\leq \frac1{n^q \si_n^{\frac qn}} \lt(\frac{p}{p-1}\rt)^q \int_0^\infty (f^{*}(t))^q  t^{\frac{q}p -1} dt\\
&=\lt(\frac1{n\si_n^{\frac1n}}\frac{p}{p-1}\rt)^q \|\Delta_g u\|_{p,q}^q,
\end{align*}
as wanted \eqref{eq:order2I1}.

We next prove \eqref{eq:order2I2}. We notice that
\[
\int_0^\infty |v'(t)|^q (n\si_n \sinh^{n-1}(F(t)))^q t^{\frac qp -1} dt =\int_0^{\infty}  \frac{(f^{**}(t))^q t^{q}}{(n\si_n \sinh^{n-1}(F(t)))^q} t^{\frac{q}p -1} dt
\]
This equality together with \eqref{eq:dthuc}, the fact $q \geq \frac{2n}{n-1}$ and the inequality \eqref{eq:Hardy} implies
\begin{align*}
&(n-1)^q\int_0^\infty |v'(t)|^q (n\si_n \sinh^{n-1}(F(t)))^q t^{\frac qp -1} dt\\
&\quad + n^q \si_n^{\frac qn}\int_0^\infty |v'(t)|^q (n\si_n \sinh^{n-1}(F(t)))^qt^{q(\frac1p -\frac1n) -1} dt\\
&\leq \int_0^\infty (f^{**}(t))^q \lt(\lt(\frac t{\si_n \sinh^n(F(t))}\rt)^{q \frac{n-1}n} + \lt(\frac{n-1}n\rt)^q \lt(\frac t{\si_n\sinh^{n-1}((F(t))}\rt)^q\rt) t^{\frac qp -1} dt\\
&\leq \int_0^\infty (f^{**}(t))^q t^{\frac qp -1} dt\\
&\leq \lt(\frac{p}{p-1}\rt)^q \int_0^\infty (f^{*}(t))^q t^{\frac qp -1} dt\\
&=\lt(\frac{p}{p-1}\rt)^q \|\Delta_g u\|_{p,q}^q
\end{align*}
as wanted \eqref{eq:order2I2}.

\end{proof}

\begin{proposition}\label{order2II}
Let $n\geq 2$, $p\in (1,n)$ and $q \geq \frac{2n}{n-1}$. Then we have
\begin{align}\label{eq:order2II2}
\int_0^\infty |v'(t)|^q (n\si_n \sinh^{n-1}(F(t)))^q t^{\frac qp -1} dt \geq& \lt(\frac{n-1}p\rt)^q \int_0^\infty |v(t)|^q t^{\frac qp -1} dt \notag\\
&+ n^q\si_n^{\frac qn} \int_0^\infty |v'(t)|^q t^{q(\frac1p -\frac1n) + q -1} dt,
\end{align}
and
\begin{align}\label{eq:order2II3}
\int_0^\infty |v'(t)|^q (n\si_n \sinh^{n-1}(F(t)))^q t^{q(\frac 1p -\frac1n)-1} dt \geq& \lt(\frac{(n-1)(n-p)}{np}\rt)^q \int_0^\infty |v(t)|^q t^{q(\frac 1p-\frac1n) -1} dt \notag\\
&+  n^q \si_n^{\frac qn} \int_0^\infty |v'(t)|^q t^{q(\frac1p -\frac2n) + q -1} dt,
\end{align}
\end{proposition}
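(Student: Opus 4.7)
The plan is to derive both \eqref{eq:order2II2} and \eqref{eq:order2II3} by the same two-step recipe: first use the key pointwise estimate \eqref{eq:keyest} to split $\sinh^{q(n-1)}(F(t))$ into two clean power-type pieces, and then invoke the one-dimensional Hardy inequality \eqref{eq:1DHardy} on one of the resulting integrals to convert a weighted integral of $|v'|^q$ into the desired weighted integral of $|v|^q$. The other piece of the decomposition will match the remaining term on the right-hand side essentially by inspection.

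More precisely, for \eqref{eq:order2II2} I would rewrite the left-hand side as $\int_0^\infty |v'(t)|^q (n\si_n)^q \sinh^{q(n-1)}(F(t))\, t^{q/p-1}\,dt$ and insert the lower bound
\[
\sinh^{q(n-1)}(F(t)) \geq (t/\si_n)^{q(n-1)/n} + \bigl((n-1)/n\bigr)^q (t/\si_n)^q,
\]
which is valid because $q \geq 2n/(n-1)$. A short exponent bookkeeping shows that the first term contributes exactly $n^q \si_n^{q/n}\int_0^\infty |v'(t)|^q t^{q(1/p - 1/n) + q - 1}\,dt$, i.e.\ the last term in \eqref{eq:order2II2}. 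The second term equals $(n-1)^q \int_0^\infty |v'(t)|^q t^{q/p + q - 1}\,dt$, and I would apply \eqref{eq:1DHardy} with the parameter $p_{\text{H}} = q(1+1/p)$, so that $(p_{\text{H}} - q)/q = 1/p$; this yields exactly the $(\tfrac{n-1}{p})^q \int_0^\infty |v(t)|^q t^{q/p - 1}\,dt$ term.

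The argument for \eqref{eq:order2II3} is identical modulo an index shift: the weight $t^{q/p-1}$ is replaced by $t^{q(1/p-1/n)-1}$, the decomposition from \eqref{eq:keyest} then yields the piece $n^q\si_n^{q/n}\int_0^\infty |v'(t)|^q t^{q(1/p-2/n)+q-1}\,dt$ matching the second term on the right, and the other piece is $(n-1)^q\int_0^\infty |v'(t)|^q t^{q(1/p-1/n)+q-1}\,dt$. Applying \eqref{eq:1DHardy} with $p_{\text{H}} = q(1 + 1/p - 1/n)$ gives $(p_{\text{H}} - q)/q = (n-p)/(np)$, producing the desired constant $((n-1)(n-p)/(np))^q$. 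Note that in both cases $p_{\text{H}} > q$ (in the second case this uses $p < n$), so the Hardy inequality \eqref{eq:1DHardy} genuinely applies.

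The only point requiring care is verifying the decay condition $\lim_{t\to\infty} |v(t)|\, t^{(p_{\text{H}}-q)/q} = 0$ needed to invoke \eqref{eq:1DHardy}; since $u \in C_0^\infty(\H^n)$, $f = -\Delta_g u$ has compact support, so $f^{**}(t) = O(1/t)$ for large $t$, and the definition of $v$ together with Proposition \ref{ddtang} (which controls $t/\sinh^{n-1}(F(t))$ from above by a constant) gives sufficient decay of $v$ at infinity. This is the only technical nuisance; everything else is straightforward substitution and exponent arithmetic, so I do not anticipate a genuinely hard step.
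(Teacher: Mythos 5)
Your proposal is correct and follows essentially the same route as the paper: split the weight via the key estimate \eqref{eq:keyest} (valid since $q\geq \frac{2n}{n-1}$), match the $n^q\si_n^{q/n}$ piece by exponent arithmetic, and apply the one-dimensional Hardy inequality \eqref{eq:1DHardy} to the $(n-1)^q$ piece with exactly the parameter choices you describe. Your additional verification of the decay hypothesis for Hardy's inequality is a point the paper leaves implicit, but it does not change the argument.
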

\begin{proof}


If $q \geq \frac{2n}{n-1}$ then by using \eqref{eq:keyest} we have
\begin{align*}
\int_0^\infty |v'(t)|^q (n\si_n \sinh^{n-1}(F(t)))^q t^{\frac qp -1} dt \geq& (n-1)^q \int_0^\infty |v'(t)|^q t^{\frac qp +q -1} dt \notag\\
&+ n^q\si_n^{\frac qn} \int_0^\infty |v'(t)|^q t^{q(\frac1p -\frac1n) + q -1} dt,
\end{align*}
Using the one dimensional Hardy inequality \eqref{eq:1DHardy}, we have
\[
\int_0^\infty |v'(t)|^q t^{\frac qp + q -1} d t \geq \lt(\frac1 p\rt)^q \int_0^\infty |v(t)|^q t^{\frac qp -1}dt.
\]
Combining these two inequalities proves the inequality \eqref{eq:order2II2}.

Since $q \geq \frac{2n}{n-1}$ then by using again \eqref{eq:keyest}, we get
\begin{align*}
\int_0^\infty |v'(t)|^q (n\si_n \sinh^{n-1}(F(t)))^q t^{q(\frac1p-\frac1n) -1} dt \geq& (n-1)^q \int_0^\infty |v'(t)|^q t^{q(\frac 1p-\frac1n) +q -1} dt \notag\\
&+ n^q\si_n^{\frac qn} \int_0^\infty |v'(t)|^q t^{q(\frac1p -\frac2n) + q -1} dt.
\end{align*}
Using the one dimensional Hardy inequality \eqref{eq:1DHardy}, we have
\[
\int_0^\infty |v'(t)|^q t^{q(\frac 1p -\frac1n) + q -1} d t \geq \lt(\frac{n-p}{n p}\rt)^q \int_0^\infty |v(t)|^q t^{q(\frac 1p -\frac1n) -1}dt.
\]
Combining these two inequalities proves the inequality \eqref{eq:order2II3}.
\end{proof}

Combining Propositions \ref{order2I} and \ref{order2II}, we obtain
\begin{theorem}\label{keytheorem}
Let $n\geq 2$. If $p \in (1,n)$ and $q \in (1,\infty)$. For any $u \in C_0^\infty(\H^n)$ we define $v$ by \eqref{eq:NgoNguyen}. Then we have
\begin{equation}\label{eq:LSorder2*}
\|\Delta_g u\|_{p,q}^q \geq \lt(\frac{n^2 \si_n^{\frac2n}}{p'}\rt)^q \int_0^\infty |v'(t)|^q t^{q(\frac1p -\frac2n) + q -1} dt,
\end{equation}
where $p' = p/(p-1)$. In particular, if $p \in (1,\frac n2)$ then it holds
\begin{equation}\label{eq:LSorder2}
\|\Delta_g u\|_{p,q}^q \geq \lt(\frac{n(n-2p)}{p p'} \si_n^{\frac 2n}\rt)^q \|u\|_{p_2^*,q}^q.
\end{equation}
Furthermore, if $p\in (1,n)$ and $q \geq \frac{2n}{n-1}$ then we have
\begin{equation}\label{eq:improvedLS2}
\|\Delta_g u\|_{p,q}^q - C(n,2,p)^q \|u\|_{p,q}^q \geq \lt(\frac{n^2 \si_n^{\frac2n}}{p'}\rt)^q \int_0^\infty |v'(t)|^q t^{q(\frac1p -\frac2n) + q -1} dt.
\end{equation}
In particular, if $p\in (1,\frac n2)$ and $q \geq \frac{2n}{n-1}$ then we have
\begin{equation}\label{eq:improvedLS2a}
\|\Delta_g u\|_{p,q}^q - C(n,2,p)^q \|u\|_{p,q}^q \geq \lt(\frac{n(n-2p)}{p p'} \si_n^{\frac 2n}\rt)^q \|u\|_{p_2^*,q}^q,\quad u \in C_0^\infty(\H^n).
\end{equation}
\end{theorem}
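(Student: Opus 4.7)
The plan is to chain the bounds from Propositions \ref{order2I} and \ref{order2II}, and then reduce to the Lorentz norms of $u$ through the pointwise inequality $u^{*}(t)\le v(t)$ from \eqref{eq:NgoNguyen} and the one-dimensional Hardy inequality \eqref{eq:1DHardy}. For the weakest inequality \eqref{eq:LSorder2*}, which needs neither $p<n/2$ nor $q\ge 2n/(n-1)$, I would start from \eqref{eq:order2I1} and insert the pointwise bound $\sinh^{n-1}(F(t))\ge (t/\si_n)^{(n-1)/n}$ obtained by raising \eqref{eq:keyyeu} to the power $(n-1)/n$. Collecting the resulting powers of $t$ and of $\si_n$, the integrand reduces to $n^{q}\si_n^{q/n}|v'(t)|^q t^{q(\frac1p-\frac2n)+q-1}$, and the prefactor $\lt(\tfrac{(p-1)n\si_n^{1/n}}{p}\rt)^q$ combines with $n^{q}\si_n^{q/n}$ to give the advertised constant $\lt(\tfrac{n^2\si_n^{2/n}}{p'}\rt)^q$.

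To pass from \eqref{eq:LSorder2*} to \eqref{eq:LSorder2} under the additional assumption $p<n/2$, I would apply Hardy \eqref{eq:1DHardy} to $v$ with the exponent $P=q(\tfrac1p-\tfrac2n)+q$; the condition $P>q$ is exactly $p<n/2$, and the Hardy constant $\lt(\tfrac{P-q}{q}\rt)^{q}$ equals $\lt(\tfrac{n-2p}{np}\rt)^{q}$. Multiplying this with the constant of \eqref{eq:LSorder2*} reproduces $\lt(\tfrac{n(n-2p)\si_n^{2/n}}{pp'}\rt)^q$, and using $u^{*}\le v$ to estimate $\int_0^\infty v(t)^{q}t^{q/p_2^{*}-1}\,dt\ge \|u\|_{p_2^{*},q}^{q}$ closes the argument.

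The improved inequality \eqref{eq:improvedLS2} is the core of the theorem, and the hypothesis $q\ge 2n/(n-1)$ is essential because it is what powers Proposition \ref{order2II}. Starting from \eqref{eq:order2I2}, I would substitute \eqref{eq:order2II2} into the integral weighted by $(n\si_n\sinh^{n-1}(F(t)))^q t^{q/p-1}$ and \eqref{eq:order2II3} into the one weighted by $(n\si_n\sinh^{n-1}(F(t)))^q t^{q(\frac1p-\frac1n)-1}$. The outcome is a sum of four nonnegative pieces; the key algebraic point is that the coefficient of $\int_0^\infty v^{q}t^{q/p-1}\,dt$ is $\lt(\tfrac{(n-1)(p-1)}{p}\rt)^q\lt(\tfrac{n-1}{p}\rt)^q=\lt(\tfrac{(n-1)^2}{pp'}\rt)^q=C(n,2,p)^q$, while the coefficient of $\int_0^\infty |v'|^q t^{q(\frac1p-\frac2n)+q-1}\,dt$ is $\lt(\tfrac{(p-1)n\si_n^{1/n}}{p}\rt)^q n^{q}\si_n^{q/n}=\lt(\tfrac{n^2\si_n^{2/n}}{p'}\rt)^q$, in perfect agreement with \eqref{eq:LSorder2*}. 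Discarding the remaining two nonnegative terms and invoking $u^{*}\le v$ gives \eqref{eq:improvedLS2}, and one further application of Hardy exactly as in the second paragraph yields \eqref{eq:improvedLS2a}. The main obstacle is precisely this bookkeeping: verifying that the cross-products of the four constants telescope cleanly to $C(n,2,p)^q$ and $\lt(\tfrac{n^2\si_n^{2/n}}{p'}\rt)^q$, so that the improved route and the direct route produce consistent sharp constants.
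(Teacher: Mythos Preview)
Your proposal is correct and follows essentially the same route as the paper: \eqref{eq:LSorder2*} from \eqref{eq:order2I1} plus the pointwise lower bound on $\sinh^{n-1}(F(t))$, \eqref{eq:LSorder2} by one application of \eqref{eq:1DHardy} and $u^*\le v$, and \eqref{eq:improvedLS2} by combining \eqref{eq:order2I2} with \eqref{eq:order2II2} and \eqref{eq:order2II3} and discarding the two leftover nonnegative terms. The only cosmetic differences are that the paper cites \eqref{eq:keyest} rather than \eqref{eq:keyyeu} for the first step (your choice is cleaner, since \eqref{eq:keyest} carries the unnecessary hypothesis $q\ge 2n/(n-1)$), and that the paper organizes the chain for \eqref{eq:improvedLS2} as two successive reductions rather than substituting both propositions into \eqref{eq:order2I2} at once; the ingredients, the discarded terms, and the constants are identical.
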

It is worthing to mention here that in the Euclidean space $\R^n$, an analogue of the inequality \eqref{eq:LSorder2} was proved by Tarsi (see \cite[Theorem $2$]{Tarsi}).
\begin{proof}
Let $u \in C_0^\infty(\H^n)$ and $v$ be defined by \eqref{eq:NgoNguyen}. We know that $u^* \leq v$, then 
\begin{equation}\label{eq:major}
\|u\|_{p,q}^q \leq \int_0^\infty v(t)^q t^{\frac qp -1} dt,\quad\text{\rm and }\quad \|u\|_{p_2^*,q}^q \leq \int_0^\infty v(t)^q t^{\frac q{p_2^*} -1} dt.
\end{equation}
The inequality \eqref{eq:LSorder2*} is a consequence of \eqref{eq:order2I1} and \eqref{eq:keyest}. The inequality \eqref{eq:LSorder2} is consequence of \eqref{eq:LSorder2*}, the one dimensional Hardy inequality \eqref{eq:1DHardy}
\[
\int_0^\infty |v'(t)|^q t^{q(\frac1p -\frac 2n) + q -1} d t \geq \lt(\frac{n-2p}{np}\rt)^q \int_0^\infty |v(t)|^q t^{q(\frac1p -\frac2n) -1}dt.
\]
and the second inequality in \eqref{eq:major}. 

To prove \eqref{eq:improvedLS2}, we first notice by the first inequality in \eqref{eq:major} that
\[
\|\Delta_g u\|_{p,q}^q - C(n,2,p)^q \|u\|_{p,q}^q \geq \|\Delta_g u\|_{p,q}^q - C(n,2,p)^q \int_0^\infty v(t)^q t^{\frac qp -1} dt.
\]
Hence, it holds
\begin{align}\label{eq:cc}
&\|\Delta_g u\|_{p,q}^q - C(n,2,p)^q \|u\|_{p,q}^q\notag\\
\geq &\|\Delta_g u\|_{p,q}^q  - \lt(\frac{n-1}{p'}\rt)^q\int_0^\infty |v'(t)|^q (n\si_n \sinh^{n-1}(F(t)))^q t^{\frac qp -1} dt\notag\\
&+ \lt(\frac{n-1}{p'}\rt)^q\lt(\int_0^\infty |v'(t)|^q (n\si_n \sinh^{n-1}(F(t)))^q t^{\frac qp -1} dt - \lt(\frac{n-1}p\rt)^q \int_0^\infty v(t)^q t^{\frac qp -1} dt\rt)\notag\\
\geq&\|\Delta_g u\|_{p,q}^q  - \lt(\frac{n-1}{p'}\rt)^q\int_0^\infty |v'(t)|^q (n\si_n \sinh^{n-1}(F(t)))^q t^{\frac qp -1} dt,
\end{align}
here we use $q \geq \frac{2n}{n-1}$ and the inequality \eqref{eq:order2II2}. Using again the assumption $q \geq \frac{2n}{n-1}$ and the inequalities \eqref{eq:order2I2} and \eqref{eq:order2II3}, we obtain
\begin{align}\label{eq:cc1}
\|\Delta_g u\|_{p,q}^q  - \lt(\frac{n-1}{p'}\rt)^q&\int_0^\infty |v'(t)|^q (n\si_n \sinh^{n-1}(F(t)))^q t^{\frac qp -1} dt\notag \\
&\geq \lt(\frac{n^2 \si_n^{\frac2n}}{p'}\rt)^q \int_0^\infty |v'(t)|^q t^{q(\frac1p -\frac2n) + q -1} dt.
\end{align}
Combining the estimates \eqref{eq:cc} and \eqref{eq:cc1} proves \eqref{eq:improvedLS2}. The inequality \eqref{eq:improvedLS2a} follows from \eqref{eq:improvedLS2} and the one dimensional Hardy inequality \eqref{eq:1DHardy}
\[
\int_0^\infty |v'(t)|^q t^{q(\frac1p -\frac 2n) + q -1} d t \geq \lt(\frac{n-2p}{np}\rt)^q \int_0^\infty |v(t)|^q t^{q(\frac1p -\frac2n) -1}dt.
\]
\end{proof}

\section{Proof of Theorem \ref{MAINI}}
In this section, we prove Theorems \ref{MAINI}. 

\begin{proof}[Proof of Theorem \ref{MAINI}]
In the case $m =1$, Theorem \ref{MAINI} was already proved in \cite{Nguyen2020a}. So, we will only consider the case $m \geq 2$. We divide the proof into three cases as follows.

\emph{Case 1: $m=2$.} If $p \in (1,n)$ and $q\geq \frac {2n}{n-1}$ then \eqref{eq:Poincarehigher} follows from \eqref{eq:improvedLS2}. In the following, we will give a proof of \eqref{eq:Poincarehigher} for any $p, q \in (0,\infty)$. By the density, it is enough to prove \eqref{eq:Poincarehigher} for function $u\in C_0^\infty(\H^n)$, $u\not\equiv 0$. Let $f = -\Delta_g u$ and $v$ be defined by \eqref{eq:NgoNguyen}. We first notice that
\[
\int_0^\infty |v'(t)|^q t^{\frac qp + q -1} dt = \int_0^\infty \lt(\frac t{n\si_n \sinh^{n-1}(F(t))}\rt)^{2q} f^{**}(t)^q  t^{\frac qp-1} dt.
\]
By Lemma \ref{ddtang}, we have
\[
\frac t{n\si_n \sinh^{n-1}(F(t))} \leqs \frac1{n-1},\quad t > 0.
\]
This inequality together with the inequality \eqref{eq:Hardy} implies
\begin{align*}
\int_0^\infty \lt(\frac t{n\si_n \sinh^{n-1}(F(t))}\rt)^{2q} f^{**}(t)^q  t^{\frac qp-1} dt &\leq \lt(\frac{p'}{(n-1)^2}\rt)^q \int_0^\infty f^*(t)^q t^{\frac qp -1}dt \\
&= \lt(\frac{p'}{(n-1)^2}\rt)^q \|\Delta_g u\|_{p,q}^q.
\end{align*}
By the one dimensional Hardy inequality and the first inequality in \eqref{eq:major}, we have
\[
\int_0^\infty |v'(t)|^q t^{\frac qp + q -1} dt \geq \frac{1}{p^q} \int_0^\infty v(t)^q t^{\frac qp -1}dt \geq \frac{1}{p^q} \|u\|_{p,q}^q.
\]
Combining two previous inequalities, we obtain \eqref{eq:Poincarehigher}.

\emph{Case 2: $m =2k$, $k\geq 1$.} This case follows from the \emph{Case 1} and the iteration argument.

\emph{Case 3: $m= 2k+1$, $k\geq 1$.} Since $q \leq p$, then it was proved in \cite[Theorem $1.1$]{Nguyen2020a} that 
\[
\|\na_g \Delta_g^k u\|_{p,q}^q \geq \lt(\frac{n-1}p\rt)^q \|\Delta_g^k u\|_{p,q}^q.
\]
We now apply the \emph{Case 2} to obtain the desired result.

We next check the sharpness of the constant $C(n,m,p)$ in \eqref{eq:Poincarehigher}. From Proposition \ref{ddtang}, we see that for any $\ep >0$ there exists $a> 0$ such that 
\[
(n-1)s < n\si_n \sinh^{n-1}(F(s)) \leq (1+ \ep) (n-1)s,
\]
for any $s \geq a$. For $R > a$, let us define the function
\[
f_{R}(s) = \begin{cases}
a^{-\frac1p} &\mbox{if $s \in (0,a)$},\\
s^{-\frac1p} &\mbox{if $s\in [a,R)$,}\\
R^{-\frac1p} \max\{2 -s/R,0\} &\mbox{if $s \geq R$}.
\end{cases}
\]
Notice that $f_R$ is a nonnegative, continuous, non-increasing function. Following \cite[Section $2.2$]{NgoNguyenAMV}, we define two sequences of functions $\{v_{R,i}\}_{i\geq 0}$ and $\{g_{R,i}\}_{i\geq 1}$ as follows:
\begin{description}
\item (i) First, we set $v_{R,0} = f_R$,
\item (ii) then in terms of $v_{R,i}$, we define $g_{R,i+1}$ as the maximal function of $v_{R,i}$, i.e.
\[
g_{R,i+1}(t) = \frac1t \int_0^t v_{R,i}(s) ds,
\]
\item (iii) and finally in terms of $g_{R,i+1}$, we define $v_{R,i+1}$ as follows
\[
v_{R,i+1}(t) = \int_t^\infty \frac{s g_{R,i+1}(s)}{(n \si_n \sinh^{n-1}(F(s)))^2} ds,
\]
for $i= 0,1,2,\ldots$
\end{description}
Note that $v_{R,i}$ and $g_{R,i}$ are positive, non-increasing functions. Following the proof of \cite[Proposition $2.1$]{NgoNguyenAMV}, we can prove the following result.
\begin{proposition}
For any $i \geq 1$, there exist function $h_{R,i}$ and $w_{R,i}$ such that 
\[
v_{R,i} = h_{R,i} + w_{R,i},\quad \int_0^\infty |w_{R,i}|^q t^{\frac qp -1}dt \leq C
\]
and
\[
\frac{1}{(1+ \ep)^{2i}} \lt(\frac{pp'}{(n-1)^2}\rt)^i f_R \leq h_{R,i} \leq \lt(\frac{pp'}{(n-1)^2}\rt)^i,
\]
where we use $C$ to denote various constants which are independent of $R$.
\end{proposition}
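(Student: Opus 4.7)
The plan is to argue by induction on $i$, exploiting the explicit piecewise form of $f_R$ together with the two--sided comparison $(n-1)s \leq n\si_n \sinh^{n-1}(F(s)) \leq (1+\ep)(n-1)s$ for $s\geq a$ provided by Proposition \ref{ddtang}. Throughout, ``admissible remainder'' means a function with weighted norm $\int_0^\infty |\cdot|^q t^{q/p-1}\,dt$ bounded by a constant depending on $\ep$ and $a$ but independent of $R$.

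\emph{Base case $i=1$.} I would first compute $g_{R,1}$ directly from the definition. On $(0,a)$ it equals $a^{-1/p}$; on $[a,R)$ an elementary integration gives
\[
g_{R,1}(t)\;=\;p'\,t^{-1/p}\;-\;\frac{a^{1-1/p}}{(p-1)\,t},
\]
so the leading term is exactly $p'\,f_R(t)$, and the correction has weighted $L^q$--norm that is finite and $R$--independent (the exponent $q/p-q-1$ gives an integrable tail at infinity). A parallel elementary computation on $[R,\infty)$ produces a term of size $O(R^{1-1/p}/t)$, whose weighted $L^q$--mass is $R$--independent after a scaling check. Substituting this decomposition into the definition of $v_{R,1}$ and invoking the sinh comparison gives, for $t\in[a,R]$,
\[
v_{R,1}(t)\;=\;\frac{pp'}{(n-1)^2}\,\theta_R(t)\,f_R(t)\;+\;w_{R,1}(t),
\]
with $\theta_R(t)\in\bigl[(1+\ep)^{-2},1\bigr]$, while the remainder $w_{R,1}$ collects the contributions from the correction term in $g_{R,1}$, from the regions $s\in(0,a)\cup[R,\infty)$ inside the integral defining $v_{R,1}$, and from the values of $v_{R,1}$ on $(0,a)\cup[R,\infty)$. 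Each such piece is dominated by an explicit elementary integral which, combined with the Hardy inequality \eqref{eq:Hardy}, is admissible. Setting $h_{R,1}:=(pp'/(n-1)^2)\,\theta_R\,f_R$ and $w_{R,1}:=v_{R,1}-h_{R,1}$ proves the case $i=1$.

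\emph{Inductive step.} Assume $v_{R,i}=h_{R,i}+w_{R,i}$ with the stated bounds. By linearity of the maximal function, $g_{R,i+1}(t)=\tfrac{1}{t}\int_0^t h_{R,i}(s)\,ds+\tfrac{1}{t}\int_0^t w_{R,i}(s)\,ds$. Since $h_{R,i}$ is sandwiched between two constant multiples of $f_R$, the first term is sandwiched between the same constant multiples of the maximal function of $f_R$ computed in the base case, and hence splits into a leading $p'\times(\text{multiplier})\times f_R$ piece plus an admissible remainder. The second term is controlled in the weighted $L^q$--norm by \eqref{eq:Hardy}, which bounds the maximal function operator on the target weighted space uniformly in $R$. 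Feeding this decomposition of $g_{R,i+1}$ into the kernel $s/(n\si_n\sinh^{n-1}(F(s)))^2$ and invoking the sinh bounds multiplies the main part by a factor lying in $\bigl[p/((1+\ep)^2(n-1)^2),\,p/(n-1)^2\bigr]$, producing precisely the extra factor $pp'/(n-1)^2$ (with an additional $(1+\ep)^{-2}$ loss on the lower bound) that is required to pass from $i$ to $i+1$. A second application of \eqref{eq:Hardy} shows that the new remainder $w_{R,i+1}$ is again admissible.

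The main technical obstacle is the book--keeping of the error pieces across iterations: one has to verify that both operators in play (the maximal function, and integration against $s/(n\si_n\sinh^{n-1}(F(s)))^2$) map the weighted $L^q$--space of admissible remainders boundedly into itself, with operator norm independent of $R$. This is precisely where the uniform bound $t/(n\si_n\sinh^{n-1}(F(t)))<1/(n-1)$ furnished by Proposition \ref{ddtang} and the Hardy inequality \eqref{eq:Hardy} enter the proof. The structural reason the induction closes is that, modulo a $(1+\ep)^2$--loss and an admissible error, these two operators acting on the dominant term $c_i\,f_R$ reproduce $c_i\cdot pp'/(n-1)^2\cdot f_R$, so the coefficient of the main term gets multiplied by exactly $pp'/(n-1)^2$ at each iteration, which is the content of the claim.
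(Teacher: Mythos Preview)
Your approach is essentially the same as the paper's: induction on $i$, with the base case handled by explicit computation (which the paper defers to \cite[Proposition~2.1]{NgoNguyenAMV}) and the inductive step driven by the boundedness of the iteration map on the weighted $L^q$--space. The paper packages the two operations into a single operator
\[
T(v)(t)=\int_t^\infty \frac{s}{(n\si_n\sinh^{n-1}(F(s)))^2}\lt(\frac{1}{s}\int_0^s v(r)\,dr\rt)ds
\]
and proves, via integration by parts, H\"older, and \eqref{eq:Hardy}, the single estimate $\int_0^\infty |T(v)|^q t^{q/p-1}dt\le (pp'/(n-1)^2)^q\int_0^\infty |v|^q t^{q/p-1}dt$; you instead bound the Hardy averaging operator and the integral operator $v\mapsto\int_t^\infty s\,(n\si_n\sinh^{n-1}(F(s)))^{-2}v(s)\,ds$ separately. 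Both routes give the same conclusion.

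One small imprecision: your phrase ``a second application of \eqref{eq:Hardy}'' for the integral--from--$t$--to--$\infty$ operator is not quite right, since \eqref{eq:Hardy} controls the averaging operator $\tfrac1t\int_0^t$, not its adjoint. What you actually need there is the dual Hardy inequality, which follows from \eqref{eq:1DHardy} applied to $V(t)=\int_t^\infty w(s)/s\,ds$ (after checking the boundary condition $V(t)t^{1/p}\to 0$). The paper's composite--operator argument sidesteps this by never separating the two pieces.
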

\begin{proof}
Let us define the operator $T$ acting on functions $v$ on $[0,\infty)$ by
\[
T(v)(t) = \int_t^\infty \frac{s}{(n \si_n (\sinh (F(s)))^2}\lt(\frac1s\int_0^s v(r) dr\rt) ds.
\]
We shall prove that
\begin{equation}\label{eq:Tnorm}
\int_0^\infty |T(v)(t)|^q t^{\frac qp -1} dt \leq \lt(\frac{p p'}{(n-1)^2}\rt)^q \int_0^\infty |v(t)|^q t^{\frac qp -1} dt.
\end{equation}
Indeed, it is enough to prove \eqref{eq:Tnorm} for nonnegative function $v$ such that 
\[
\int_0^\infty |v(t)|^q t^{\frac qp -1} dt < \infty.
\]
We claim that 
\begin{equation}\label{eq:claimT}
\lim_{t\to 0} T(v)(t) t^{\frac1p} = 0 = \lim_{t\to \infty} T(v)(t) t^{\frac1p}.
\end{equation}
For any $\ep >0$ there exists $t_0 >0$ such that $\int_0^{t_0} |v(t)|^q t^{\frac qp -1} dt \leq \ep^q$. For $s \leq t_0$, by using H\"older inequality, we have
\[
\frac 1s \int_0^s v(r) dr \leq \frac1s \lt(\int_0^s v(r)^q r^{\frac qp -1} dr\rt)^{\frac1q} \lt(\int_0^s r^{\frac1{q-1} -\frac{q}{p(q-1)}} dr\rt)^{\frac{q-1}q} \leq C \ep s^{-\frac1p}.
\]
This together with the inequality $n \sigma_n \sinh^{n-1}(F(s)) > (n-1)s$ implies for $t\leq t_0$ that
\begin{align*}
T(v)(t) &=\Big(\int_t^{t_0} + \int_{t_0}^\infty\Big) \frac{s}{(n \si_n (\sinh (F(s)))^2}\lt(\frac1s\int_0^s v(r) dr\rt) ds\\
&\leq C \ep \int_t^{t_0} s^{-1-\frac1p} ds + C \lt(\int_0^\infty v(r)^q r^{\frac qp -1} dr\rt)^{\frac1q}\int_{t_0}^\infty s^{-1-\frac1p} ds\\
&\leq C \ep (t^{-\frac1p} -t_0^{-\frac1p}) + C t_0^{-\frac1p} \lt(\int_0^\infty v(r)^q r^{\frac qp -1} dr\rt)^{\frac1q}.
\end{align*}
This estimate yields
\[
\limsup_{t\to 0} T(v)(t) t^{\frac1p} \leq C \ep.
\]
Since $\ep >0$ is arbitrary, then the first limit in \eqref{eq:claimT} is proved.

Similarly, for any $\ep >0$ there exists $t_1 >0$ such that $\int_{t_1}^\infty |v(t)|^q t^{\frac qp -1} dt < \ep^q$. Hence, for $s \geq t_1$, by using H\"older inequality we get
\[
\int_0^s v(r) dr = \int_0^{t_1} v(r) dr + \int_{t_1}^s v(r) dr \leq C\lt(\int_{0}^\infty |v(t)|^q t^{\frac qp -1} dt\rt)^{\frac1q} t_1^{1-\frac1p}+ C \ep s^{1-\frac1p}.
\]
Consequently, for any $t\geq t_1$ we get
\begin{align*}
T(v)(t) &\leq C \int_t^\infty \lt(\lt(\int_{0}^\infty |v(t)|^q t^{\frac qp -1} dt\rt)^{\frac1q} t_1^{1-\frac1p} s^{-2} + \ep s^{-1-\frac1p}\rt) ds\\
&\leq C \lt(\int_{0}^\infty |v(t)|^q t^{\frac qp -1} dt\rt)^{\frac1q} t_1^{1-\frac1p} t^{-1} + C \ep t^{-\frac1p}.
\end{align*}
This estimate implies
\[
\limsup_{t\to \infty} T(v)(t) t^{\frac1p} \leq C \ep.
\]
Since $\ep >0$ is arbitrary, then the second limit in \eqref{eq:claimT} is proved.

Using the integration by parts, the  claim \eqref{eq:claimT} and the inequality $n\si_n \sinh^{n-1}(F(t)) > (n-1)t$, we have
\begin{align*}
\int_0^\infty T(v)(t)^q t^{\frac qp -1} dt &= \frac pq \int_0^\infty T(v)(t)^q (t^{\frac qp})' dt\\
&= p \int_0^\infty T(v)(t)^{q-1} \lt(\frac t{n \si_n \sinh^{n-1}(F(t))}\rt)^2\lt(\frac1t \int_0^t v(s) ds\rt) t^{\frac qp -1} dt\\
&\leq \frac{p}{(n-1)^2} \int_0^\infty T(v)(t)^{q-1} \lt(\frac1t \int_0^t v(s) ds\rt) t^{\frac qp -1} dt.
\end{align*}
An easy application of H\"older inequality implies
\[
\int_0^\infty T(v)(t)^q t^{\frac qp -1} dt \leq \lt(\frac p{(n-1)^2}\rt)^q \int_0^\infty \lt(\frac1t \int_0^t v(s) ds\rt)^q t^{\frac qp -1}dt.
\]
The inequality \eqref{eq:Tnorm} follows from the previous inequality and the Hardy inequality \eqref{eq:Hardy}.

Thus, with the help of \eqref{eq:Tnorm}, we can using the induction argument to prove this proposition by establishing the result for $v_{R,1}$. In fact, the decomposition for $v_{R,1}$ is already proved in the proof of Proposition $2.1$ in \cite{NgoNguyenAMV}. The estimate 
\[
\int_0^\infty |w_{R,1}|^q t^{\frac qp -1} dt \leq C,
\]
is proved by the same way of the estimate $\int_0^\infty |w_{R,1}|^p dt \leq C$.
\end{proof}

We are now ready to check the sharpness of $C(n,m,p)$. The case $m=1$ was done in \cite{Nguyen2020a}. Hence, we only consider the case $m\geq 2$. We first consider the case $m =2k, k\geq 1$. Define 
$$u_R(x) = v_{R,k}(V_g(B(0,d(0,x)))).$$
It is clear that $(-\Delta_g)^k u_R(x) = f_R(V_g(B(0,d(0,x))))$. Hence, there hold
\[
\|\Delta_g^k u_R\|_{p,q}^q =\int_0^\infty f_R(t)^q t^{\frac qp -1}dt = \frac pq + \ln \frac Ra + \int_1^2 (2-s)^q s^{\frac qp -1} ds,
\]
and
\begin{align*}
\|u_R\|_{p,q} &= \lt(\int_0^\infty v_{R,k}^q t^{\frac qp-1} dt\rt)^{\frac1q}\\
&\geq \lt(\int_0^\infty h_{R,k}^q t^{\frac qp-1} dt\rt)^{\frac1q} - \lt(\int_0^\infty |w_{R,k}|^q t^{\frac qp-1} dt\rt)^{\frac1q}\\
&\geq \frac1{(1+ \ep)^{2k})} \lt(\frac{pp'}{(n-1)^2}\rt)^k \lt(\int_0^\infty f_R^q t^{\frac qp-1} dt\rt)^{\frac1q} - C\\
&\geq \frac1{(1+ \ep)^{2k})} \lt(\frac{pp'}{(n-1)^2}\rt)^k \lt( \frac pq + \ln \frac Ra + \int_1^2 (2-s)^q s^{\frac qp -1} ds\rt)^{\frac1q} - C.
\end{align*}
These estimates imply
\[
\limsup_{R\to \infty} \frac{\|\Delta_g^k u_R\|_{p,q}^q}{\|u_R\|_{p,q}^q} \leq (1+ \ep)^{2kq} C(n,2k,p)^k,
\]
for any $\ep >0$. This proves the sharpness of $C(n,2k,p)$. We next consider the case $m =2k+1$, $k\geq 1$. Define
\[
u_R(x) = v_{R,k}(V_g(B(0,d(0,x)))).
\]
It is clear that $(-\Delta_g)^k u_R(x) = f_R(V_g(B(0,d(0,x))))$. It was shown in the proof of Theorem $1.1$ in \cite{Nguyen2020a} (the sharpness of $C(n,1,p)$) that 
\[
\limsup_{R\to \infty} \frac{\|\na_g \Delta_g^k u_R\|_{p,q}^q}{\|\Delta_g^k u_R\|_{p,q}^q} \leq (1+\ep)^q \frac{(n-1)^q}{p^q}.
\]
This together with the estimate in the case $m=2k$ implies
\begin{align*}
\limsup_{R\to \infty} \frac{\|\na_g \Delta_g^k u_R\|_{p,q}^q}{\|u_R\|_{p,q}^q} &\leq \limsup_{R\to \infty} \frac{\|\na_g \Delta_g^k u_R\|_{p,q}^q}{\|\Delta_g^k u_R\|_{p,q}^q} \limsup_{R\to\infty}\frac{\|\Delta_g^k u_R\|_{p,q}^q}{\|u_R\|_{p,q}^q}\\
& \leq (1+\ep)^{(2k+)q} C(n,2k+1,p)^q,
\end{align*}
for any $\ep \geqs 0$. This proves the sharpness of $C(n,2k+1,p)$. 

The proof of Theorem \ref{MAINI} is then completely finished.
\end{proof}

\section{Proof of Theorem \ref{MAINII}}
This section is addressed to prove Theorem \ref{MAINII}. The proof uses the results from Theorem \ref{keytheorem} and \cite[Theorem $1.2$]{Nguyen2020a}.

\begin{proof}[Proof of Theorem \ref{MAINII}]
The case $m=1$ was already proved in \cite{Nguyen2020a}. So, we only consider the case $m\geq 2$. We divide the proof into two cases as follows.

\emph{Case 1: $m=2k$, $k\geq 1$.} The case $k=1$ follows from \eqref{eq:improvedLS2a}. For $k\geq 2$, by using Theorem \ref{MAINI}, we have
\[
\|\Delta_g^k u\|_{p,q}^q - C(n,2k,p)^q \|u\|_{p,q}^q \geq \|\Delta_g^k u\|_{p,q}^q -C(n,2,p)^q \|\Delta^{k-1}_g u\|_{p,q}^q.
\]
Applying the inequality \eqref{eq:improvedLS2a} to the right hand side of the previous inequality, we obtain
\[
\|\Delta_g^k u\|_{p,q}^q - C(n,2k,p)^q \|u\|_{p,q}^q \geq \lt(\frac{n(n-2p)}{p p'} \si_n^{\frac 2n}\rt)^q \|\Delta_g^{k-1}u\|_{p_2^*,q}^q.
\]
By iterating the inequality \eqref{eq:LSorder2}, we then have
\[
\|\Delta_g^k u\|_{p,q}^q - C(n,2k,p)^q \|u\|_{p,q}^q \geq \lt(\si_n^{\frac {2k}n} \prod_{i=0}^{k-1} \frac{n (n-2p_{2i}^*)}{p_{2i}^* (p_{2i}^*)'}\rt)^q \|u\|_{p_{2k}^*,q}^q,
\]
as wanted \eqref{eq:LorentzPS}.

\emph{Case 1: $m=2k+1$, $k\geq 1$.} In this case, we have
\[
\|\na_g \Delta_g^k u\|_{p,q}^q - C(n,2k+1,p)^q \|u\|_{p,q}^q \geq \|\na_g \Delta_g^k u\|_{p,q}^q -\lt(\frac{n-1}p\rt)^q \|\Delta_g^k u\|_{p,q}^q.
\]
Since $q \leq p$ we then have from Theorem $1.2$ in \cite{Nguyen2020a} that
\[
\|\na_g \Delta_g^k u\|_{p,q}^q -\lt(\frac{n-1}p\rt)^q \|\Delta_g^k u\|_{p,q}^q \geq \lt(\frac{n-p}p \si_n^{\frac1n}\rt)^q \|\Delta_g^k u\|_{p^*,q}^q.
\]
Hence, it holds
\[
\|\na_g \Delta_g^k u\|_{p,q}^q - C(n,2k+1,p)^q \|u\|_{p,q}^q \geq \lt(\frac{n-p}p \si_n^{\frac1n}\rt)^q \|\Delta_g^k u\|_{p^*,q}^q.
\]
By iterating the inequality \eqref{eq:LSorder2}, we then have
\[
\|\na_g \Delta_g^k u\|_{p,q}^q - C(n,2k+1,p)^q \|u\|_{p,q}^q \geq \lt(\si_n^{\frac {2k+1}n}\frac{n-p}p  \prod_{i=1}^{k} \frac{n (n-2p_{2i-1}^*)}{p_{2i-1}^* (p_{2i-1}^*)'}\rt)^q \|u\|_{p_{2k+1}^*,q}^q,
\]
as desired \eqref{eq:LorentzPS}.
\end{proof}


\bibliographystyle{abbrv}

\begin{thebibliography}{10}

\bibitem{Alvino1977}
A.~Alvino.
\newblock Sulla diseguaglianza di {S}obolev in spazi di {L}orentz.
\newblock {\em Boll. Un. Mat. Ital. A (5)}, 14(1):148--156, 1977.

\bibitem{Aubin1976}
T.~Aubin.
\newblock Probl\`emes isop\'{e}rim\'{e}triques et espaces de {S}obolev.
\newblock {\em J. Differential Geometry}, 11(4):573--598, 1976.

\bibitem{BenguriaFrankLoss}
R.~D. Benguria, R.~L. Frank, and M.~Loss.
\newblock The sharp constant in the {H}ardy-{S}obolev-{M}az'ya inequality in
  the three dimensional upper half-space.
\newblock {\em Math. Res. Lett.}, 15(4):613--622, 2008.

\bibitem{Bennett}
C.~Bennett and R.~Sharpley.
\newblock {\em Interpolation of operators}, volume 129 of {\em Pure and Applied
  Mathematics}.
\newblock Academic Press, Inc., Boston, MA, 1988.

\bibitem{BerchioNA}
E.~Berchio, L.~D'Ambrosio, D.~Ganguly, and G.~Grillo.
\newblock Improved {$L^p$}-{P}oincar\'{e} inequalities on the hyperbolic space.
\newblock {\em Nonlinear Anal.}, 157:146--166, 2017.

\bibitem{BerchioJFA}
E.~Berchio, D.~Ganguly, and G.~Grillo.
\newblock Sharp {P}oincar\'{e}-{H}ardy and {P}oincar\'{e}-{R}ellich
  inequalities on the hyperbolic space.
\newblock {\em J. Funct. Anal.}, 272(4):1661--1703, 2017.

\bibitem{CassaniRufTarsi2018}
D.~Cassani, B.~Ruf, and C.~Tarsi.
\newblock Equivalent and attained version of {H}ardy's inequality in
  {$\Bbb{R}^n$}.
\newblock {\em J. Funct. Anal.}, 275(12):3303--3324, 2018.

\bibitem{Hong2019}
Q.~Hong.
\newblock Sharp constant in third-order hardy–sobolev–maz’ya inequality
  in the half space of dimension seven.
\newblock {\em Int. Math. Res. Not. IMRN, in press}, 2019.

\bibitem{Lieb}
E.~H. Lieb.
\newblock Sharp constants in the {H}ardy-{L}ittlewood-{S}obolev and related
  inequalities.
\newblock {\em Ann. of Math. (2)}, 118(2):349--374, 1983.

\bibitem{LuYang2019a}
G.~Lu and Q.~Yang.
\newblock Green's functions of paneitz and gjms operators on hyperbolic spaces
  and sharp hardy-sobolev-maz'ya inequalities on half spaces.
\newblock {\em preprint, arXiv:1903.10365}, 2019.

\bibitem{LuYang2019}
G.~Lu and Q.~Yang.
\newblock Paneitz operators on hyperbolic spaces and high order
  hardy-sobolev-maz'ya inequalities on half spaces.
\newblock {\em Amer. J. Math.}, 141(6):1777--1816, 2019.

\bibitem{ManciniSandeep2008}
G.~Mancini and K.~Sandeep.
\newblock On a semilinear elliptic equation in {$\Bbb H^n$}.
\newblock {\em Ann. Sc. Norm. Super. Pisa Cl. Sci. (5)}, 7(4):635--671, 2008.

\bibitem{NgoNguyenAMV}
Q.~A. Ng\^{o} and V.~H. Nguyen.
\newblock Sharp constant for {P}oincar\'{e}-type inequalities in the hyperbolic
  space.
\newblock {\em Acta Math. Vietnam.}, 44(3):781--795, 2019.

\bibitem{NguyenPLMS}
V.~H. Nguyen.
\newblock Sharp weighted {S}obolev and {G}agliardo-{N}irenberg inequalities on
  half-spaces via mass transport and consequences.
\newblock {\em Proc. Lond. Math. Soc. (3)}, 111(1):127--148, 2015.

\bibitem{NguyenPS2018}
V.~H. Nguyen.
\newblock The sharp {P}oincar\'{e}-{S}obolev type inequalities in the
  hyperbolic spaces {$\Bbb{H}^n$}.
\newblock {\em J. Math. Anal. Appl.}, 462(2):1570--1584, 2018.

\bibitem{Nguyenorder2}
V.~H. Nguyen.
\newblock Second order {S}obolev type inequalities in the hyperbolic spaces.
\newblock {\em J. Math. Anal. Appl.}, 477(2):1157--1181, 2019.

\bibitem{Nguyen2020a}
V.~H. Nguyen.
\newblock The sharp {S}obolev type inequalities in the {L}orentz--{S}obolev
  spaces in the hyperbolic spaces.
\newblock {\em preprint}, 2019.

\bibitem{Talenti1976}
G.~Talenti.
\newblock Best constant in {S}obolev inequality.
\newblock {\em Ann. Mat. Pura Appl. (4)}, 110:353--372, 1976.

\bibitem{Tarsi}
C.~Tarsi.
\newblock Adams' inequality and limiting {S}obolev embeddings into {Z}ygmund
  spaces.
\newblock {\em Potential Anal.}, 37(4):353--385, 2012.

\bibitem{Tataru2001}
D.~Tataru.
\newblock Strichartz estimates in the hyperbolic space and global existence for
  the semilinear wave equation.
\newblock {\em Trans. Amer. Math. Soc.}, 353(2):795--807, 2001.

\end{thebibliography}

\end{document}